\newcommand{\numeoc}[1]{\num[round-precision=1,round-mode=places, scientific-notation=false]{#1}}
\newcommand{\numnum}[1]{\num[round-precision=2,round-mode=places, scientific-notation=false]{#1}}
\definecolor{darkblue}{rgb}{0,0,.7}
\def\norm#1{\|#1\|}
\newcommand{\vertiii}[1]{{\left\vert\kern-0.25ex\left\vert\kern-0.25ex\left\vert #1 
    \right\vert\kern-0.25ex\right\vert\kern-0.25ex\right\vert}}
\def\enorm#1{\|#1\|_{\mathcal C}} 
\def\hnorm#1{\|#1\|_h} 
\def\anorm#1{\|#1\|_{\mathcal A}} 
\def\snorm#1{\mu^{-1/2}\|#1\|_{0}}
   \newcommand\Hdnull{ H_0(\div\!\!:\!\Omega)} 
      \newcommand\Hdg{ H_g(\div\!\!: \!\Omega) } 
       \newcommand\Vtwo{[L^2(\Omega)]^\dim } 
  \newcommand\Hdiv{ H(\mathrm{div}\!: \!\Omega)}
      \newcommand\Sh{S_h}
   \newcommand\Vh{V_h}
    \newcommand\HH{ [L^2(\Omega)]^{\dim \times \dim} _{{\rm sym}}  } 
   \newcommand\VV{[H_D^1(\Omega)]^\dim}
   \newcommand\bfs{\sigma}
  \newcommand\bft{\tau}
 \newcommand\ph{u_h}
  \newcommand\uoh{ u_h^{a}}
  \newcommand\Ioh{I_h^a}
\newcommand\Ch{{\mathcal C}_h}
\newcommand\B{{\mathcal B}}
\newcommand\M{{\mathcal M}}
\renewcommand\H{H({\rm div}\!:\!\Omega)}
\newcommand\comp {\mathcal {C}} 
\newcommand\elas {{\mathcal A}} 
\newcommand\tr{{\rm tr}}
\def\BF0{0}
\newcommand{\eps}{\varepsilon}
\renewcommand{\div}{\mathrm{div}\,}
\newcommand{\jump}[1]{ [\! [#1 ]\!]}
\def\real{\mathbb{R}}
  \def\dim{d} 
  \def\D{D} 
  \newtheorem{theorem}{Theorem}
\newtheorem{lemma}{Lemma}
\newtheorem{remark}{Remark}
\numberwithin{equation}{section}
\title[Energy norm analysis of Mixed Finite Elements for Elasticity]{Energy norm analysis of exactly symmetric mixed finite elements for linear elasticity}
\author[P.~L.~Lederer]{Philip L. Lederer}
\address{Department of Mathematics and Systems Analysis, Aalto University,
Otakaari 1, Espoo, Finland}
\email{philip.lederer@aalto.fi}
\author[R.~Stenberg]{Rolf Stenberg}
\address{Department of Mathematics and Systems Analysis, Aalto
University, Otakaari 1, Espoo, Finland}
\email{rolf.stenberg@aalto.fi}
\thanks{This work was supported by the Academy of Finland (Decision 324611).}
\begin{document}
 
\maketitle
\begin{abstract} We consider mixed finite element methods for linear elasticity for which the 
symmetry of the stress tensor is exactly satisfied. We derive a   new quasi-optimal a priori
error  estimate uniformly valid with respect to the compressibility.
For the a posteriori error analysis we consider the Prager-Synge
hypercircle principle and introduce a new estimate uniformly valid in
the incompressible limit. All estimates are validated by numerical
examples.
\end{abstract}


\section{Introduction}
The purpose of this paper is to provide an analysis of mixed finite
element methods for linear elasticity. By mixed methods we mean
methods based on the principle of minimum of the complementary energy,
i.e. methods in which the strain energy is expressed with the stress
tensor and the equation of static equilibrium acts as a constraint. The
Lagrange multiplier connected to this constraint is the displacement
vector. By an integration by parts,  this formulation is dual to the
standard formulation of minimizing the total energy where Dirichlet
boundary condition for the displacement now become natural and
traction conditions are enforced. A crucial property of this class of
methods is that each element is point wise in mechanical equilibrium
and along each element edge or face the traction vector is continuous.
These conditions are naturally very appealing,  and that was the
original motivation to develop the methods in the engineering
community \cite{FdV,WH}.  The formulation can be traced by various
energy methods classically used in elastostatics, cf.
\cite{MR1106633}. The first mathematical treatment of this principle
was done  by Friedrichs \cite{Friedrichs1928} (cf. also \cite[Chapter IV.9]{MR0065391}).

Mathematically, the methods (and the corresponding simpler methods for
scalar second order problems) have been analyzed thoroughly, and the
literature is voluminous as can be seen from the monograph by Boffi,
Brezzi and Fortin \cite{MR3097958}. The analysis framework usually
used is the one laid down in the pioneering work by Raviart and Thomas
\cite{RT}. More recently, tools of differential exterior calculus has
been used to gain insight into the methods \cite{MR2269741}, which has
lead to numerous new methods. These analyses uses the $H(\div\!)$ and
$L^2$ norms for the stress and the displacement, respectively. From a
mechanical point of view this can be seen as unnatural since the norms
do not have any physical meaning and hence blur the origin of the
methods, the minimization of the strain energy. In our analysis we
derive the estimates in energy, or closely related, norms. This
approach was initiated in a classical paper by Babu{\v{s}}ka, Osborn
and Pitk\"aranta \cite{BOP}. For the elasticity problem their
technique of mesh dependent norms was first used by Pitk\"aranta and
Stenberg \cite{PS}. The norms used are basically the energy norm for
the stresses and a broken energy norm for the displacement. The 
broken $H^1$ norm dates back, at least, to the  early works on interior penalty methods, cf.  
Arnold \cite{MR664882}, Douglas and Dupont
\cite{MR0440955} and Baker \cite{MR431742}.
During the last decades the interior penalty
methods have received considerable attention, now under the name of
discontinuous Galerkin methods, cf. \cite{DiPietroErn}. 

The
equilibrium property enables the error in the stress to be decoupled
from that of  the displacement (which is of lower order). However, it also
leads to a superconvergence estimate for the difference between the
finite element solution and the $L^2$ projection of the displacement.
Brezzi and Arnold where the first to observe that this can be used in
a local postprocessing yielding a more accurate approximation
\cite{AB}. In Lovadina and Stenberg  \cite{carlorolf} it was shown
that this leads to a simple a posteriori error estimate. 

In this paper we first collect   the techniques and results mentioned
above. In addition, we improve and extend the analysis. For the a
priori analysis we use Gudi's trick for DG methods \cite{Gudi}  in
order to improve the estimate removing the flaw that the exact stress
should be in $H^s$, with $s>1/2$. In addition, we use a second
postprocessing, the so-called Oswald interpolation \cite{DiPietroErn},
to obtain a kinematically admissible displacement, i.e. continuous and
satisfying the Dirichlet boundary conditions. With this we utilize the
classical hypercircle principle of Prager and Synge
\cite{MR25902,MR600655}, which states with a kinematically admissible
displacement and a statically admissible stress approximation, i.e.
one with  exact satisfaction of the  equilibrium and traction boundary
conditions,  it is possible to get an a posteriori estimate in the
form of an equality. This idea we use to derive an a posteriori
estimate that includes oscillation terms in the case when the
equilibrium and the traction boundary are satisfied only
approximately. 

One attractive feature of mixed methods is that they are robust also
in the incompressible limit. For the a priori estimate this is a
consequence of the ellipticity in the kernel in Brezzi's theory of
saddle point problems which is valid. The Prager-Synge hypercircle
estimate, however, breaks down near incompressibility. For this case
we introduce a novel a posteriori estimate.

 The plan of the paper is the following: in the next two sections we
 first recall the elasticity problem and then discuss its mixed
 approximation. We concretize it  by the  Watwood-Hartz (or
 Johnson-Mercier) \cite{WH,JM} method,  and the families of Arnold-Douglas-Gupta \cite{ADG} and Arnold-Awanou-Winther \cite{AAW3D}.
 Then we derive the a priori estimates for both the stress and the
 post processed displacement in Section~\ref{sec::stabandapriori} an
 Section~\ref{sec::postprocessing}, respectively. In
 Section~\ref{sec::hypercircle} we prove both the Prager-Synge and our
 new a posteriori estimate. In the final section numerical results are
 given. 
 
 In a subsequent paper \cite{ls2}, we will consider mixed methods with the symmetry of the stress tensor enforced in a weak sense.

We use the established notation for Sobolev spaces and
finite element methods. We write  $A \lesssim B$ when there exist a positive constant $C$,   that is independent of the mesh
parameter and \emph{in particular} of the two Lam\'e parameters $\mu,
\lambda$ (see below) such that $A \le C B$.  Analogously we define $A \gtrsim B$. 
This means that the
dependency of the two Lam\'e parameters are made explicit in the norms
used.


\section{The Equations of elasticity}

Let $\Omega\subset \real^\dim$ be a polygonal or polyhedral domain.  The
physical unknowns are the displacement vector $u=(u_1,\dots,u_\dim)$ and
the symmetric  stress tensor $\sigma=\{ \sigma_{ij}\}$, $\sigma_{ij}
=\sigma_{ji}$, $i,j=1,\dots, \dim$. The stress tensor is related to the
strain tensor
\begin{equation}
\varepsilon(u) , \quad \varepsilon(u)_{ij} = \frac{1}{2} \Big(  \frac{\partial u_i}{\partial x_j} +\frac{\partial u_j}{\partial x_i}    \Big),
\end{equation}
by a linear constitutive law. In order to be explicit, we consider the plain strain ($\dim=2$) or 3D ($\dim=3$)  problem for an isotropic material. 
 We define  the compliance matrix
\begin{equation}\label{consteq}
\comp \bft = \frac{1}{2\mu} \Big(  \bft -\frac{\lambda}{ 2\mu +\dim\lambda} \tr(\bft)I \Big),
\end{equation}
with the Lam\'e parameters
 $\mu \mbox{ and } \lambda$. 
 Then it holds
 \begin{equation}
 \comp \bfs +\varepsilon(u)=0.
 \end{equation}
The inverse of the compliance matrix,  the elasticity matrix, we denote by $\elas$,
 i.e. 
 \begin{equation} \label{eq::inccompl}
 \elas \tau = \comp^{-1}\tau =2\mu \tau +\lambda \tr(\tau) I. 
 \end{equation} 
 In the limit   $\lambda\to \infty$ the material becomes incompressible, i.e. 
 \begin{equation}
 \div u=0.
 \end{equation}
The loading consists of a body load $f$ and a traction $g$ on the boundary part $\Gamma_N$. On the complementary part $\Gamma_D$ homogeneous Dirichlet conditions for the displacement are given.
 The 
 equations of elasticity in mixed form are then
\begin{eqnarray}
\comp \bfs -\varepsilon( u )&=&0\  \mbox{ in  }  \Omega,\\
\div \bfs + f&=&0 \  \mbox{ in  }  \Omega, \label{e-eqs}
\\
u&=&0 \  \mbox{ on } \Gamma_D,
\\
\bfs n&=&g \ \mbox{ on } \Gamma_N. \label{neumann} \end{eqnarray} For
this system we use  two variational formulations. The first is: find
$\bfs \in \HH$ and $u\in \VV=\{\,  v\in [H^1(\Omega)]^\dim\, \vert \,
v\vert_{\Gamma_D}=0 \,\}$ such that 
\begin{equation}
\B (\bfs, u; \bft, v) =(f,v) + \langle   g, v \rangle_{\Gamma_N} \quad \forall(\bft,v) \in \HH \times \VV,
\end{equation}
with the bilinear form 
\begin{equation}
\B (\bfs, u; \bft, v) =  (\comp \bfs,\bft ) - ( \varepsilon(u),\bft) - (\varepsilon(v) , \bfs). 
\end{equation}
Physically, the natural norms for analyzing this problem are 
 \begin{equation}
\enorm{ \bft } ^2= ( \comp \bfs , \bfs)\   \mbox{ and } \ \anorm{\varepsilon( u )}^2=   (\elas\varepsilon(u)  , \varepsilon(u)),
 \end{equation}
 which are twice the strain energy expressed by the stress and displacement, respectively. The 
 Babu{\v{s}}ka--Brezzi condition is then simply the identity
 \begin{equation}
\sup_{\bft\in \HH } \frac{\big(\bft, \varepsilon(v)\big)}{\enorm{\bft}}= \anorm{ \varepsilon(v)} \quad \forall v\in \VV.
\end{equation}
This gives the following stability estimate (with a known constant, cf. \cite{HSV})
\begin{equation}
\begin{aligned}
 \sup_{
  \eta \in \HH \atop
  z\in \VV} &\frac{\B(\bft,v; \eta, z)} {\big(\enorm{\eta}^2 + \anorm{ \varepsilon(z)}^2\big)^{1/2} } \geq \Big(\frac{\sqrt{5}-1}{2} \Big)
 \Big(    \enorm{\bft}^2+  \anorm{ \varepsilon(v)}^2\Big)^{1/2}
 \\ & \qquad \qquad \forall (\bft,v)\in \HH \times \VV.
 \end{aligned}
 \end{equation}

In the incompressible limit $\enorm{ \cdot } $ does not, however,  define a norm, and the full stability is a consequence of the ellipticity in the kernel.
Instead of using the abstract theory of Brezzi \cite{MR3097958}, we give an explicit proof of stability.
More precisely, for $\bft$ we 
let $\bft^\D$ be the deviatoric part of $\bft$, defined by the condition $\tr(\bft^\D)=0. $  Hence it holds
 \begin{equation}
  \bft = \bft^\D + \frac{1}{\dim} \tr(\bft) I.
  \end{equation}
  By a direct computation we get.
\begin{lemma} It  holds that
  \begin{equation}\label{cbound}
  (\comp  \bft, \bft) = \frac{1}{2\mu} \Vert  \bft^\D\Vert_0^2 + \frac{1} {2\mu +\dim \lambda} \Vert \tr(\bft) \Vert_0^2.
  \end{equation}
  \end{lemma}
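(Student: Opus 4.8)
The plan is to establish \eqref{cbound} by a direct computation, the single structural ingredient being the $L^2(\Omega)$-orthogonality of the deviatoric and the spherical part of a symmetric tensor field. For the Frobenius matrix inner product one has $I:\bft=\tr(\bft)$ and $I:I=\dim$, and, since $\tr(\bft^\D)=0$ by definition of $\bft^\D$, the pointwise splitting $\bft=\bft^\D+\tfrac1\dim\tr(\bft)\,I$ is orthogonal because $\bft^\D:I=\tr(\bft^\D)=0$.

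First I would integrate these relations over $\Omega$ to record the two identities
\begin{equation*}
\|\bft\|_0^2=\|\bft^\D\|_0^2+\frac1\dim\|\tr(\bft)\|_0^2
\qquad\text{and}\qquad
\big(\tr(\bft)\,I,\bft\big)=\|\tr(\bft)\|_0^2 .
\end{equation*}
Then I would substitute the definition \eqref{consteq} of the compliance operator and expand by linearity of the inner product,
\begin{equation*}
(\comp\bft,\bft)=\frac1{2\mu}\Big(\|\bft\|_0^2-\frac{\lambda}{2\mu+\dim\lambda}\big(\tr(\bft)\,I,\bft\big)\Big),
\end{equation*}
and insert the two identities above (the cross term in the expansion of $\|\bft\|_0^2$ vanishes by the orthogonality just noted). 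Finally I would collect the resulting multiple of $\|\tr(\bft)\|_0^2$ and simplify its scalar coefficient by reducing the two fractions involved to the common denominator $\dim(2\mu+\dim\lambda)$, which yields \eqref{cbound}; the $\|\bft^\D\|_0^2$ term already appears in final form with coefficient $\tfrac1{2\mu}$.

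I do not expect any genuine obstacle: the assertion is an exact algebraic identity, and once the orthogonal volumetric/deviatoric split is used the remainder is a one-line manipulation of scalar fractions. The only point deserving a little care is keeping track of the two factors of $\dim$ that enter — one from $\tr(I)=\dim$ and one from the normalization $\tfrac1\dim$ in the decomposition — so that the coefficient of the trace term is collected correctly.
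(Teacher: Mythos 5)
Your approach is the right one and is essentially the paper's own (the paper offers nothing beyond ``by a direct computation''): split $\bft=\bft^\D+\frac1\dim\tr(\bft)I$, use the $L^2$-orthogonality of the two parts together with $I:\bft=\tr(\bft)$ and $I:I=\dim$, insert the definition \eqref{consteq} of $\comp$, and collect the coefficient of $\Vert\tr(\bft)\Vert_0^2$.

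There is, however, one point you should not wave away, and it is exactly the bookkeeping of the factors of $\dim$ that you flag at the end. Carrying out your own plan gives
\begin{equation*}
(\comp\bft,\bft)=\frac1{2\mu}\Vert\bft^\D\Vert_0^2+\frac1{2\mu}\Big(\frac1\dim-\frac{\lambda}{2\mu+\dim\lambda}\Big)\Vert\tr(\bft)\Vert_0^2
=\frac1{2\mu}\Vert\bft^\D\Vert_0^2+\frac{1}{\dim\,(2\mu+\dim\lambda)}\Vert\tr(\bft)\Vert_0^2,
\end{equation*}
since $\frac1\dim-\frac{\lambda}{2\mu+\dim\lambda}=\frac{2\mu}{\dim(2\mu+\dim\lambda)}$. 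So the coefficient of the trace term carries an extra factor $\frac1\dim$ compared with \eqref{cbound} as printed; a quick sanity check with $\dim=2$ and $\bft=I$ on a domain of unit measure gives $(\comp I,I)=\frac{2}{2\mu+2\lambda}$ versus the $\frac{4}{2\mu+2\lambda}$ predicted by \eqref{cbound}. Your computation is the correct one and the stated identity appears to have dropped the $\frac1\dim$; the discrepancy is harmless for everything that follows (only the $\lambda$-dependence of the coefficient is used, and $\dim\in\{2,3\}$ is a fixed constant absorbed into $\lesssim$), but you should not assert that the calculation ``yields \eqref{cbound}'' verbatim when it in fact yields the identity with the additional $\frac1\dim$.
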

   From this it is seen that when  $\lambda \to \infty$,  $\enorm{
   \cdot } $   does not  give control over the pressure part (the
   trace) of the stress tensor. 
   
   To derive estimates valid independently of  $\lambda$ we use the norms 
   \begin{equation}
   \mu^{-1/2} \Vert \bft \Vert_0 \ \mbox{ for }\bft \in  \HH, \ \mbox{ and}   \
     \mu^{1/2} \Vert \varepsilon(v) \Vert_0 \  \mbox{ for }v \in\VV.
   \end{equation}
   For the stability estimate the 
    Babu{\v{s}}ka--Brezzi condition for the Stokes problem \cite{MR851383}
    \begin{equation}\label{stokes}
  \sup_{v\in \VV} \frac{(\div v, q)}{\Vert  \varepsilon (v)\Vert_0} \geq \beta \Vert q \Vert_0 \quad \forall q \in L^2(\Omega),s
  \end{equation}
    is needed. Using this we prove the following stability estimate.
\begin{theorem}\label{incompstab}  It holds that
\begin{equation}
\begin{aligned}
 \sup_{
  \eta \in \HH \atop
  z\in \VV}  \ \frac{\B(\bft,v; \eta, z)} {\big(\mu^{-1} \Vert \eta \Vert_0^2 + \mu \Vert   \varepsilon(z) \Vert_0^2\big)^{1/2} } &\gtrsim
 \big(   \mu^{-1} \Vert \bft \Vert_0^2+ \mu \Vert \varepsilon(v) \Vert_0^2\big)^{1/2}
 \\ &  \forall (\bft,v)\in \HH \times \VV.
 \end{aligned}
 \end{equation}
 \end{theorem}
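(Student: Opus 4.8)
The plan is to establish the inf-sup condition by choosing, for a given pair $(\bft,v)\in\HH\times\VV$, a suitable test pair $(\eta,z)$ and bounding $\B(\bft,v;\eta,z)$ from below by a constant times $\mu^{-1}\|\bft\|_0^2+\mu\|\varepsilon(v)\|_0^2$, while the norm of $(\eta,z)$ stays controlled by the same quantity. The natural first attempt is the diagonal choice $\eta=\bft$, $z=-v$, which gives $\B(\bft,v;\bft,-v)=(\comp\bft,\bft)+2(\varepsilon(v),\bft)$. By Lemma on \eqref{cbound} the first term equals $\tfrac{1}{2\mu}\|\bft^\D\|_0^2+\tfrac{1}{2\mu+\dim\lambda}\|\tr(\bft)\|_0^2$; this controls $\mu^{-1}\|\bft^\D\|_0^2$ but degenerates on the trace part as $\lambda\to\infty$, so the diagonal choice alone is insufficient and must be augmented.

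The key idea is to recover control of $\tr(\bft)$ through the Stokes inf-sup condition \eqref{stokes}. Split $\bft=\bft^\D+\tfrac1\dim\tr(\bft)I$ and note $(\bft,\varepsilon(w))=(\bft^\D,\varepsilon(w))+\tfrac1\dim(\tr(\bft),\div w)$ for any $w\in\VV$. Applying \eqref{stokes} with $q=\tr(\bft)$, pick $w\in\VV$ with $(\div w,\tr(\bft))\ge\beta\|\tr(\bft)\|_0\|\varepsilon(w)\|_0$ and, after scaling, $\|\varepsilon(w)\|_0=\mu^{-1}\|\tr(\bft)\|_0$. Then test $\B$ with $\eta=0$, $z=\delta w$ for a small parameter $\delta>0$: this produces a term $+\tfrac{\delta}{\dim}(\tr(\bft),\div w)\gtrsim \delta\,\mu^{-1}\|\tr(\bft)\|_0^2$, at the cost of a cross term $-\delta(\varepsilon(w),\bft^\D)$ and a contribution $-\delta(\varepsilon(v),\comp\cdot)$-type coupling through $-(\varepsilon(z),\bfs)$, both of which are absorbed by Cauchy--Schwarz and Young's inequality into the already-controlled quantities $\mu^{-1}\|\bft^\D\|_0^2$ and $\mu\|\varepsilon(v)\|_0^2$, provided $\delta$ is chosen small enough (depending only on $\beta$, $\dim$).

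So the final test function is the combination $\eta=\bft$, $z=-v+\delta w$. Collecting terms, $\B(\bft,v;\eta,z)\ge c_1\mu^{-1}\|\bft^\D\|_0^2+c_2\delta\mu^{-1}\|\tr(\bft)\|_0^2 - (\text{absorbable cross terms})$, which after fixing $\delta$ is bounded below by a constant times $\mu^{-1}\|\bft\|_0^2$; one then still needs to bring in $\mu\|\varepsilon(v)\|_0^2$, which is done by adding a further $\varepsilon\,\mu$-weighted multiple of the diagonal test in $v$ or, more cleanly, by the standard observation that once $\bft$ is controlled the equation $-(\varepsilon(v),\eta)$ with a third choice $\eta$ proportional to $\mu\varepsilon(v)$ supplies the missing displacement term (again absorbing cross terms by Young). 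Finally one checks $\mu^{-1}\|\eta\|_0^2+\mu\|\varepsilon(z)\|_0^2\lesssim \mu^{-1}\|\bft\|_0^2+\mu\|\varepsilon(v)\|_0^2$ using $\|\varepsilon(w)\|_0=\mu^{-1}\|\tr(\bft)\|_0\le \mu^{-1}\|\bft\|_0$. The main obstacle is the bookkeeping of the cross terms: one must verify that the negative contributions coming from $-\delta(\varepsilon(w),\bft^\D)$ and from the $w$-part of $-(\varepsilon(z),\bfs)$ can genuinely be absorbed with a $\delta$ (and $\varepsilon$) independent of $\lambda$ and $\mu$ — this is where the $\lambda$-uniformity is won or lost, and it relies essentially on the fact that $\comp$ applied to a deviatoric tensor is exactly $\tfrac1{2\mu}$ times it, so no $\lambda$ enters the problematic estimates.
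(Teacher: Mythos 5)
Your proposal follows essentially the same route as the paper: the diagonal test $(\bft,-v)$, a Stokes inf-sup correction in the displacement slot (the paper's $-\delta z$ with $(\div z,\tr(\bft))\geq\beta\Vert\tr(\bft)\Vert_0^2$, $\delta=\beta/(2\mu\dim)$) to recover $\Vert\tr(\bft)\Vert_0$ uniformly in $\lambda$, and a correction proportional to $\mu\,\varepsilon(v)$ in the stress slot to recover $\mu\Vert\varepsilon(v)\Vert_0^2$, with the cross terms absorbed by Young's inequality using only $\beta$, $\dim$ and the deviatoric structure of $\comp$. One computation should be fixed: since $\B(\bfs,u;\bft,v)=(\comp\bfs,\bft)-(\varepsilon(u),\bft)-(\varepsilon(v),\bfs)$, the two off-diagonal terms cancel and $\B(\bft,v;\bft,-v)=(\comp\bft,\bft)$ exactly, not $(\comp\bft,\bft)+2(\varepsilon(v),\bft)$ --- and this cancellation is essential, because a genuine term $2(\varepsilon(v),\bft)\leq \mu\Vert\varepsilon(v)\Vert_0^2+\mu^{-1}\Vert\bft\Vert_0^2$ is of the same size as the target lower bound and could not be absorbed.
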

 \begin{proof} 
Let  $(\bft, v)$ be given. By \eqref{stokes}  there exists $z\in \VV$ such that 
  \begin{equation}\label{uselbb}
  (\div z, \tr(\bft) )\geq \beta  \Vert  \tr(\bft)  \Vert_0^2   \quad \mbox{and }     \Vert \varepsilon(z)\Vert_0=      \Vert \tr(\bft) \Vert_0.
  \end{equation}
  Let $\delta>0 $ and $ \gamma >0$. By the bilinearity we have 
    \begin{equation}
  \begin{aligned}
  \B(&\bft,v; \bft-\gamma \varepsilon(v)  ,-v -\delta z) 
  \\=&\B(\bft,v; \bft ,-v)  
 -\gamma  \B(\bft,v;\varepsilon(v)  ,0) 
-\delta \B(\bft,v; 0 ,  z) .
  \end{aligned}
  \end{equation}
For the first term we have
  \begin{equation}
  \B(\bft,v; \bft ,-v)  = (\comp \bft, \bft) = \frac{1}{2\mu} \Vert  \bft^\D\Vert_0^2 + \frac{1} {2\mu +\dim \lambda} \Vert \tr(\bft) \Vert_0^2.
  \end{equation}
  From  \eqref{consteq} we have
  \begin{equation}
  \Vert \comp \bft\Vert_0\leq \frac{1}{2\mu} \Big( \Vert \bft \Vert_0 +\frac{1}{\dim} \Vert \tr(\bft ) I \Vert_0 \Big)\leq  \frac{1}{\mu}  \Vert \bft \Vert_0. 
  \end{equation}
  Using the Schwarz and Young inequalities then gives
    \begin{equation}
    \begin{aligned}
     -\gamma  \B(\bft,v;\varepsilon(v)  ,0) &=-\gamma  (\comp \bft, \varepsilon(v) )+\gamma \Vert \varepsilon(v) \Vert_0^2
     \geq -\frac{\gamma}{2} \Vert \comp \bft\Vert_0^2   + \frac{\gamma}{2} \Vert \varepsilon(v) \Vert_0^2  
     \\
     &\geq
      -\frac{\gamma}{2\mu^2} \Vert  \bft\Vert_0^2   + \frac{\gamma}{2} \Vert \varepsilon(v) \Vert_0^2 
      . 
      \end{aligned} \end{equation}
     By \eqref{uselbb} we have
    \begin{equation}
    \begin{aligned}
    -\delta &\B( \bft,v; 0 ,  z)=\delta (\bft , \varepsilon(z)) =   \frac{ \delta}{\dim}   (\tr(\bft) , \div z) + \delta (\bft^\D , \varepsilon(z))
    \\&\geq  \frac{ \delta\beta }{\dim}   \Vert  \tr(\bft)  \Vert_0^2 +\delta (\bft^\D , \varepsilon(z))
    \geq  \frac{ \delta\beta }{\dim}   \Vert  \tr(\bft)  \Vert_0^2 - \delta \Vert \bft^\D \Vert_0 \Vert  \varepsilon(z)\Vert_0
        \\& =  \frac{ \delta\beta }{\dim}   \Vert  \tr(\bft)  \Vert_0^2 - \delta \Vert \bft^\D \Vert_0 \Vert  \tr(\bft)  \Vert_0
          \\& \geq  \frac{ \delta\beta }{2\dim}  \Vert  \tr(\bft)  \Vert_0^2 -\frac{ \delta \dim}{2\beta} \Vert \bft^\D \Vert_0^2.
    \end{aligned}
  \end{equation}
  Collecting the above estimates, we get
  \begin{equation}
   \begin{aligned}
   \B( &\bft,v; \bft-\gamma \varepsilon(v)  ,-v -\delta z) 
   \\   \geq   &\Big( \frac{1}{2\mu} -\frac{ \delta d}{2\beta}\Big)\Vert  \bft^\D\Vert_0^2 +\Big(   
    \frac{1} {2\mu +\dim \lambda}+ \frac{ \delta\beta }{2d}\Big) \Vert \tr(\bft) \Vert_0^2  
 \\
 & -  \frac{\gamma}{2 \mu^2}  \Vert   \bft \Vert_0 ^2 
   +\frac{\gamma}{2}   \Vert \varepsilon(v)\Vert_0^2. 
   \end{aligned}
   \end{equation}
  Now we choose $ \delta=\frac{\beta}{2\mu \dim}  $, which gives 
  \begin{equation}
  \begin{aligned}
   \B( & \bft,v; \bft-\gamma \varepsilon(v) ,-v -\delta z) 
  \\  \geq   &\frac{1}{4\mu} \Vert  \bft^\D\Vert_0^2 +\Big(   
    \frac{1} {2\mu +\dim \lambda}+ \frac{  \beta ^2}{4\dim^2 \mu}\Big) \Vert \tr(\bft) \Vert_0^2  -  \frac{\gamma}{2 \mu^2}  \Vert   \bft \Vert_0 ^2 
   +\frac{\gamma}{2}   \Vert \varepsilon(v)\Vert_0^2. 
    \end{aligned}
  \end{equation}
  Let $C>0$ such that
  \begin{equation}
  \frac{1}{4\mu} \Vert  \bft^\D\Vert_0^2 +\Big(   
    \frac{1} {2\mu +\dim \lambda}+ \frac{  \beta ^2}{4\dim^2 \mu}\Big) \Vert \tr(\bft) \Vert_0^2 \geq \frac{C}{\mu}  \Vert   \bft \Vert_0 ^2, 
  \end{equation}
  and choose $\gamma = C\mu$. This gives 
  \begin{equation}
  B(  \bft,v; \bft-\gamma \varepsilon(v) ,-v -\delta z) \geq \frac{C}{2 } \Big( \mu^{-1} \Vert   \bft \Vert_0 ^2+\mu  \Vert \varepsilon(v)\Vert_0^2\Big).
  \end{equation}
  It also holds
 \begin{equation}
\mu^{-1}\Vert  \bft-\gamma \varepsilon(v)\Vert_0^2 + \mu \Vert \varepsilon(  v +\delta z)\Vert_0^2 \lesssim \mu^{-1}  \Vert \bft \Vert_0^2+ \mu \Vert \varepsilon(v) \Vert_0^2,
 \end{equation}
 which proves the  asserted estimate.
 \end{proof}

The second variational form is the basis for the mixed finite element method. By dualisation the stress is in 
\begin{equation}
\Hdiv= \{ \, \bft\in\HH\, \vert \, \div \bft \in [L^2(\Omega)]^\dim \, \}, 
\end{equation} 
the displacement in $\Vtwo$,  and the bilinear form used is 
\begin{equation}
\M (\bfs, u; \bft, v) =  (\comp \bfs,\bft ) + (  u,\div \bft) +   (v  , \div \bfs),  
\end{equation}
and the formulation is: find $\bfs \in \Hdg$  and $u\in \Vtwo$,  such that
\begin{equation}\label{2ndvariational}
\M (\bfs, u; \bft, v)+(f,v) =0\quad \forall (\bft,v) \in \Hdnull \times \Vtwo,
\end{equation}
with 
\begin{equation}
\Hdg= \{ \, \bft \in \H \, \vert \  \bft n \vert_{\Gamma_N} =g\, \},   \end{equation}
and 
\begin{equation}
 \Hdnull= \{ \, \bft \in \H \, \vert \  \bft n \vert_{\Gamma_N} =0\, \} .
\end{equation}
 
\section{Exactly symmetric mixed finite element methods}
\label{sec::fem} 

The mixed finite element method is based on the variational formulation \eqref{2ndvariational}
using piecewise polynomial 
subspaces $\Vh\subset \Vtwo$ and $\Sh\subset \Hdiv$.
We give a unified presentation that covers the following methods:

 \begin{itemize}
\item The linear triangular method of \emph{Johnson--Mercier} (JM) \cite{JM,WH}. 
\item The triangular family of  \emph{Arnold--Douglas--Gupta} \cite{ADG}.
\item The triangular family of    \emph{Guzman--Neilan}\, \cite{MR3149075}.
\item The tetrahedral family of  \emph{Arnold--Awanou--Winther} \cite{AAW3D}.
\end{itemize}
The triangular or tetrahedral mesh is denoted by $\Ch$. The families are indexed by the polynomial degree $k\geq 2 $ and the displacement space is simply
 \begin{equation} \label{eq::globaldisplspace}
 \Vh= \{\, v\in \Vtwo\, \vert \ v\vert _K\in [P_{k-1}(K)]^\dim\ \forall K\in \Ch\,\}.
 \end{equation}
For the JM method the space in \eqref{eq::globaldisplspace} appears with $k=2$, i.e. discontinuous piecewise linear polynomials are used. 
 
 The spaces for the stress are defined by  
 \begin{equation} \label{eq::globalstressspace}
 \Sh=\{ \, \bft \in \Hdiv\, \vert \, \bft\vert _K\in S(K) \, \forall K \in \Ch\,\}.
 \end{equation}
 The local spaces $S(K)$ are rather involved and here we will not give
 the explicitly definitions. The essential properties are, however, the
 right approximation order which is ensured by the inclusion
 \begin{equation} \label{eq::stressapproxassumption}
  [P_{k}(K)]^{n\times n}_{\mathrm{sym} }\subset S(K),
  \end{equation}
  and the degrees of freedom needed for the stability; 
the local degrees of freedom of $\bft\in S(K) $ 
contain the moments 
\begin{equation}\label{Kmom}
\int_K \bft : \varepsilon(v) \quad \forall v\in [P_{k-1}(K)]^\dim,
\end{equation}
and
\begin{equation}\label{Emom}
\int_E \bft n \cdot v \quad \forall v\in [P_{k}(E)]^\dim, 
\end{equation}
for each edge or face $E$ of $K$.   
For the JM method \eqref{eq::stressapproxassumption}, \eqref{Kmom} and \eqref{Emom} are valid with $k=1$.

For all spaces, except JM, we have
\begin{equation}\label{eqprop}
\div \bft \in \Vh \quad \forall \bft \in \Sh,
\end{equation}
and hence for these it holds
\begin{equation}\label{projpro}
(\div \bft, v-P_hv) = 0 \quad \forall \bft \in \Sh,
\end{equation}
where $P_h :\Vtwo \to \Vh$ denotes the $L^2$-projection. For JM \eqref{eqprop} does not hold. However, it is easily seen that if $\bft\in S(K)$ satisfies 
\begin{equation}
(\div \bft,v)_K = 0 \quad \mbox{for } v\in[P_1(K)]^2
\end{equation}
then $\div \bft=0 $ on $K$. By this  there exist a projection with the property \eqref{projpro}, cf.
\cite[Lemma 2]{JM} and \cite[Lemma 4.3]{PS}.
 
 For an edge/face $E\subset \Gamma_N$ we let $Q_E :  [L^2(E)]^\dim \to
 [P_k(E)]^\dim$ denote the $L^2$ projection and define the operator $Q_h$ by
 $Q_h\vert_E= Q_E$.

The trial and test finite element spaces are then defined as 
\begin{equation}
\begin{aligned}
\Sh^g&=\{ \, \bft\in \Sh\, \vert \, \bft n=Q_h g\mbox{ on } \Gamma_N \,   \}, \\
 \ \Sh^0&=\{ \, \bft\in \Sh\, \vert \, \bft n=0 \mbox{ on } \Gamma_N \,   \}.
 \end{aligned}
\end{equation}
 
The mixed finite element method is:
 find $(\bfs_h,u_h)\in \Sh^g \times \Vh$ such that 
\begin{equation}
\M (\bfs_h, u_h; \bft, v) + (f,v)=0\quad \forall (\bft,v) \in \Sh^0\times \Vh. 
\end{equation}

\section{Stability and a priori error analysis} \label{sec::stabandapriori}
 
 In this section we will derive a priori error estimates. For the
displacement we use the following broken energy norm, first introduced in \cite{PS,RS86}.  Here $\Gamma_h$ denotes the edges/faces in the interior of $\Omega$.  
\begin{equation}
\hnorm{v}^2 = \sum_{K\in \Ch}   \Vert \varepsilon(v) \Vert_{0,K} ^2+ \sum_{E\in \Gamma_{h }}h_E ^{-1} 
\Vert
\jump{v}\Vert_{0,E}^2+ \sum_{E\subset \Gamma_D }h_E ^{-1} 
\Vert
 v\Vert_{0,E}^2\quad  v\in V_h .
\end{equation}

The stability of the method is proven by  the following two
conditions.
\begin{lemma} It holds that
\begin{equation} \label{stability2}
\sup_{\bft\in\Sh^0}  \frac{\big(\div \bft,  v \big)}
{ \norm{\bft}_0}
 \gtrsim  \hnorm{v}  \quad \forall v\in \Vh.
\end{equation}
\end{lemma}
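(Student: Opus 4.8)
The plan is to construct, for each given $v\in\Vh$, a test function $\bft\in\Sh^0$ with $\norm{\bft}_0 \lesssim \hnorm{v}$ such that $(\div\bft,v)\gtrsim\hnorm{v}^2$. The construction is local and driven by the degrees of freedom \eqref{Kmom} and \eqref{Emom}: on each element $K$ we prescribe the interior moments $\int_K\bft:\varepsilon(w)$ for $w\in[P_{k-1}(K)]^\dim$ and, on each interior or Dirichlet edge/face $E$, the face moments $\int_E\bft n\cdot w$ for $w\in[P_k(E)]^\dim$; on Neumann faces the face moments are set to zero so that $\bft\in\Sh^0$. The first step is therefore to choose these data so that, after integration by parts elementwise,
\begin{equation}
(\div\bft,v) = -\sum_{K\in\Ch}(\bft,\varepsilon(v))_K + \sum_{E\in\Gamma_h}\int_E \bft n\cdot\jump{v} + \sum_{E\subset\Gamma_D}\int_E \bft n\cdot v
\end{equation}
reproduces each of the three squared quantities in $\hnorm{v}^2$. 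Concretely one picks the interior moments so that $\bft|_K$ acts like $-\varepsilon(v)|_K$ against $[P_{k-1}(K)]^\dim$ (legitimate since $\varepsilon(v)|_K$ is a symmetric polynomial of degree $k-2$), giving the term $\sum_K\norm{\varepsilon(v)}_{0,K}^2$; and one picks the edge moments as $\bft n|_E = h_E^{-1}\jump{v}$ on interior edges and $h_E^{-1}v$ on Dirichlet edges (again admissible since $\jump{v}|_E, v|_E \in [P_k(E)]^\dim$ whenever $v\in\Vh$), producing $\sum_E h_E^{-1}\norm{\jump{v}}_{0,E}^2$ and $\sum_{E\subset\Gamma_D}h_E^{-1}\norm{v}_{0,E}^2$. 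Since the degrees of freedom can be prescribed independently, such a $\bft$ exists.

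The second step is the norm bound $\norm{\bft}_0\lesssim\hnorm{v}$. Here I would work on the reference element $\widehat K$, where the space $S(\widehat K)$ is finite-dimensional, and bound $\norm{\widehat\bft}_{0,\widehat K}$ by the moduli of the chosen degrees of freedom via equivalence of norms on a finite-dimensional space; then scale back with the standard Piola/affine scaling estimates. The interior contribution scales to $\norm{\varepsilon(v)}_{0,K}$, and a face degree of freedom of size $h_E^{-1}\norm{\jump{v}}_{0,E}$ (measured in the appropriate scaled norm, which carries an $h_E^{-1/2}$) contributes, after scaling back, an amount controlled by $h_E^{-1}\norm{\jump{v}}_{0,E}$ — exactly the term in $\hnorm{v}$. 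Summing over elements, using shape-regularity so that $h_E\simeq h_K$ for $E\subset\partial K$ and that each edge/face belongs to at most two elements, yields $\norm{\bft}_0^2\lesssim\hnorm{v}^2$. Combining the two steps,
\begin{equation}
\frac{(\div\bft,v)}{\norm{\bft}_0} \gtrsim \frac{\hnorm{v}^2}{\hnorm{v}} = \hnorm{v},
\end{equation}
which is the assertion.

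The main obstacle is the norm estimate of Step 2, specifically making the scaling argument uniform in $h$ and in the geometry: one must verify that the degrees of freedom \eqref{Kmom}--\eqref{Emom}, together with the Neumann constraints, form (a subset of) a unisolvent set for $S(K)$ so that $\bft$ is well defined, and that the constant in the finite-dimensional norm equivalence on $\widehat K$ is the same across the (finitely many, up to affine equivalence) reference configurations used by the four families — for the JM method one additionally needs the substitute property that vanishing of the $[P_1(K)]^\dim$-moments of $\div\bft$ forces $\div\bft=0$, as recalled after \eqref{projpro}. A secondary technical point is the correct $h_E$-weighting in the scaled trace norm on faces, which must match the $h_E^{-1}$ weights in $\hnorm{\cdot}$; getting these powers right is what makes the estimate robust. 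Everything else — the integration by parts, the choice of data, and the summation over the mesh — is routine.
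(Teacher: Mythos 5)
Your proposal is correct and follows essentially the same route as the paper: construct $\bft\in\Sh^0$ by prescribing the degrees of freedom \eqref{Kmom}--\eqref{Emom} so that $(\div\bft,v)=\hnorm{v}^2$ (with the Neumann face moments set to zero), and then obtain $\Vert\bft\Vert_0\lesssim\hnorm{v}$ by a scaling argument on the reference element. Your version is in fact slightly more careful than the paper's about the sign in the elementwise integration by parts and about spelling out the unisolvence and $h$-uniformity behind the phrase ``by scaling''; no substantive difference.
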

\begin{proof}
 Let $v \in \Vh$ be given.   We choose $\bft \in \Sh^0$ such that all degrees of freedom vanish, except  \eqref{Kmom} and \eqref{Emom} which are chosen such that
 \begin{equation} 
\int_K \bft : \varepsilon(v)= \int_K \vert  \varepsilon(v) \vert^2\quad \forall K \in \Ch,
\end{equation}
\begin{equation} 
\int_E \bft n \cdot v = h_E^{-1} \int_E\vert  \jump{v} \vert^2\quad \forall E \in \Gamma_h,
\end{equation}
 and 
 \begin{equation}
\int_E \bft n \cdot v = h_E^{-1} \int_E\vert  v \vert^2\quad \forall E \subset \Gamma_D .
\end{equation}
Hence
\begin{equation}
(\div \bft, v) = \hnorm{v}^2.
\end{equation}
By scaling it holds
\begin{equation}
\Vert \bft \Vert_0 \lesssim \hnorm{v} ,
\end{equation}
which proves the claim.
\end{proof}
 
\begin{lemma} It holds that
\begin{equation}
\sup_{v\in  V_h} \frac{(v, \div \bft)}{\Vert v \Vert_h } \geq C_1\Vert \tr(\bft) \Vert_0-C_2\Vert \bft^\D \Vert_0 \quad \forall \bft\in \Sh^0.
\end{equation}
\end{lemma}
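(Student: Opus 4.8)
The plan is to follow the pattern of the proof of Theorem~\ref{incompstab}: the Stokes inf--sup condition \eqref{stokes} produces a continuous displacement that controls the trace $\tr(\bft)$, and I transport it into $\Vh$ by the $L^2$-projection, using \eqref{projpro} so that the pairing with $\div\bft$ is preserved. Concretely, I would take an arbitrary $\bft\in\Sh^0$ and put $q=\tr(\bft)$; the case $q=0$ is trivial since the right-hand side is then non-positive, so assume $q\neq 0$. By \eqref{stokes} there is $w\in\VV$ with $(\div w,q)\geq\beta\|q\|_0^2$ and $\|\varepsilon(w)\|_0=\|q\|_0$, and I would test with $v=-P_hw\in\Vh$. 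For the Johnson--Mercier method, where \eqref{eqprop} fails, $P_h$ is to be replaced throughout by the projection from \cite[Lemma~2]{JM}, \cite[Lemma~4.3]{PS} that satisfies \eqref{projpro}, which has the same properties used below.

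For the numerator, \eqref{projpro} gives $(v,\div\bft)=-(P_hw,\div\bft)=-(w,\div\bft)$; an integration by parts, using that $\bft$ is symmetric, $w|_{\Gamma_D}=0$, and $\bft n|_{\Gamma_N}=0$, turns this into $(v,\div\bft)=(\bft,\varepsilon(w))$. Splitting $\bft=\bft^\D+\tfrac1d\,q\,I$ and using $\tr\varepsilon(w)=\div w$ together with the Cauchy--Schwarz inequality and the normalisation of $w$, I get
\[
(v,\div\bft)=(\bft^\D,\varepsilon(w))+\tfrac1d(q,\div w)\ \geq\ \tfrac{\beta}{d}\,\|q\|_0^2-\|\bft^\D\|_0\,\|q\|_0 .
\]

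The remaining, and main, task is to bound the denominator by $\|v\|_h=\|P_hw\|_h\lesssim\|\varepsilon(w)\|_0=\|q\|_0$. Since $\Vh$ is fully discontinuous, $P_h$ acts elementwise, so $\varepsilon(P_hw)|_K$ is the strain of a local polynomial projection of $w|_K$; an inverse inequality combined with the scaled Korn--Poincar\'e inequality on $K$ gives $\|\varepsilon(P_hw)\|_{0,K}\lesssim\|\varepsilon(w)\|_{0,K}$. For the face terms I would write $\jump{P_hw}=\jump{P_hw-w}$ on interior faces, and $P_hw=P_hw-w$ on faces in $\Gamma_D$ (both because $w$ is continuous and vanishes on $\Gamma_D$), and bound $h_E^{-1/2}\|P_hw-w\|_{0,E}$ by a local trace inequality and the approximation property of the local $L^2$-projection. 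Summing over $\Ch$ and over the faces in $\Gamma_h$ and on $\Gamma_D$, using shape regularity and Korn's second inequality with $w|_{\Gamma_D}=0$, yields $\|P_hw\|_h\lesssim\|\nabla w\|_0\lesssim\|\varepsilon(w)\|_0$. I expect this broken-energy-norm stability of the $L^2$-projection of an $H^1$-field (and verifying the analogous bound for the JM projection) to be the most delicate point; note that one cannot simply replace $P_h$ by a Scott--Zhang or Cl\'ement operator, since the identity $(P_hw,\div\bft)=(w,\div\bft)$ used above relies precisely on $P_h$ being the $L^2$-projection.

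Combining the two estimates, for $q\neq0$,
\[
\sup_{v'\in\Vh}\frac{(v',\div\bft)}{\|v'\|_h}\ \geq\ \frac{(v,\div\bft)}{\|v\|_h}\ \geq\ \frac1C\Big(\tfrac{\beta}{d}\,\|\tr(\bft)\|_0-\|\bft^\D\|_0\Big),
\]
which is the assertion with $C_1=\beta/(Cd)$ and $C_2=1/C$; the case $q=0$ follows since the supremum is always non-negative.
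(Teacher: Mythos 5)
Your argument is correct and is essentially the paper's own proof: the Stokes inf--sup function, transported into $\Vh$ by the projection of \eqref{projpro}, integration by parts with the deviatoric splitting for the numerator, and the stability bound $\hnorm{P_h w}\lesssim\Vert\varepsilon(w)\Vert_0$ for the denominator (which the paper dispatches with the phrase ``by scaling'' and which you spell out, correctly, via inverse/trace estimates and Korn).
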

\begin{proof}
Given $\bft\in  \Sh^0$, \eqref{stokes}  implies that there exists $v
\in [H^1_D(\Omega)]^\dim$ such that
\begin{equation}
(\div v, \tr(\bft)) \geq -\beta \Vert \tr(\bft)\Vert_0^2 \ \mbox{and } \ \Vert \varepsilon(v) \Vert_0=  \Vert \tr(\bft)\Vert_0.
\end{equation}
Let $P_h v\in V_h$ be the projection in \eqref{projpro}.
It holds
\begin{equation}
\begin{aligned}
 (P_h v&, \div \bft) = (v, \div \bft)  
=-(  \varepsilon(v),  \bft) 
\\& =   - \frac{1}{d} (  \div v,  \tr(\bft) )-(  \varepsilon(v),  \bft^\D) 
   \geq\frac{ \beta}{d} \Vert \tr(\bft)\Vert_0^2  -\Vert   \varepsilon(v) \Vert \Vert\bft^\D\Vert_0
  \\& = \Vert \varepsilon(v) \Vert_0\big(  \frac{\beta}{d} \Vert \tr(\bft)\Vert_0    -   \Vert\bft^\D\Vert_0\big).
\end{aligned}
\end{equation}
By scaling we have
\begin{equation}
\hnorm{P_h v} \lesssim \Vert \varepsilon(v) \Vert_0.
\end{equation}
Combining the two estimates above proves the claim.
\end{proof}

In analogy with the proof of Theorem \ref{incompstab}  we then obtain the stability of the mixed method.
\begin{theorem} It holds that
\begin{equation}\label{stab2}
\begin{aligned}
 \sup_{(\eta,z)\in  \Sh^0\times \Vh} \frac{\M(\bft,v; \eta, z)} {\big(\mu^{-1} \Vert \eta \Vert_0^2 + \mu \hnorm{   z }^2\big)^{1/2} } \gtrsim &
 \big(   \mu^{-1} \Vert \bft \Vert_0^2+ \mu \hnorm{v}^2\big)^{1/2}
 \\ & \forall (\bft,v)\in  \Sh^0\times \Vh.
 \end{aligned}
 \end{equation}
 \end{theorem}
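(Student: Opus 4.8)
The plan is to follow the argument of Theorem~\ref{incompstab} essentially verbatim, using the two preceding lemmas to produce \emph{discrete} surrogates for the continuous test fields $\varepsilon(v)$ and the Stokes correction $z$. Fix $(\bft,v)\in\Sh^0\times\Vh$. The starting point is the identity $\M(\bft,v;\bft,-v)=(\comp\bft,\bft)$, which by \eqref{cbound} equals $\tfrac{1}{2\mu}\|\bft^\D\|_0^2+\tfrac{1}{2\mu+\dim\lambda}\|\tr(\bft)\|_0^2$; this controls $\|\bft^\D\|_0$ but, in the incompressible limit, neither $\|\tr(\bft)\|_0$ nor $\hnorm v$, so two corrections are needed.

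By (the constructive proof of) the stability bound \eqref{stability2} there is $\xi\in\Sh^0$ with $(\div\xi,v)=\hnorm v^2$ and $\|\xi\|_0\lesssim\hnorm v$, and by the second lemma there is $w\in\Vh$ with $(w,\div\bft)\ge\|\tr(\bft)\|_0\big(\tfrac{\beta}{\dim}\|\tr(\bft)\|_0-\|\bft^\D\|_0\big)$ and $\hnorm w\lesssim\|\tr(\bft)\|_0$. For parameters $\gamma,\delta>0$ to be fixed, I would test with $(\eta,z)=(\bft+\gamma\xi,\,-v+\delta w)\in\Sh^0\times\Vh$; the contributions $\pm(v,\div\bft)$ cancel and bilinearity gives
\[
\M(\bft,v;\eta,z)=(\comp\bft,\bft)+\gamma(\comp\bft,\xi)+\gamma\hnorm v^2+\delta(w,\div\bft).
\]
Then, exactly as in Theorem~\ref{incompstab}, I bound $\gamma(\comp\bft,\xi)$ from below using $\|\comp\bft\|_0\le\mu^{-1}\|\bft\|_0$, the bound on $\|\xi\|_0$ and Young's inequality (absorbing at most $\tfrac{\gamma}{2}\hnorm v^2$), and I bound $\delta(w,\div\bft)$ from below by $\tfrac{\delta\beta}{2\dim}\|\tr(\bft)\|_0^2-\tfrac{\delta\dim}{2\beta}\|\bft^\D\|_0^2$, again by Young. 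Choosing $\delta\sim\beta/(\mu\dim)$ keeps the $\|\bft^\D\|_0^2$ coefficient $\gtrsim\mu^{-1}$ and turns the $\|\tr(\bft)\|_0^2$ coefficient into something $\gtrsim\mu^{-1}$; picking $\gamma\sim\mu$ small enough absorbs the negative $\gamma\mu^{-2}\|\bft\|_0^2$ term, and one is left with $\M(\bft,v;\eta,z)\gtrsim\mu^{-1}\|\bft\|_0^2+\mu\hnorm v^2$.

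It remains to bound the denominator: with these $\gamma,\delta$, the triangle inequality together with $\|\xi\|_0\lesssim\hnorm v$ and $\hnorm w\lesssim\|\tr(\bft)\|_0\le\|\bft\|_0$ gives $\mu^{-1}\|\eta\|_0^2+\mu\hnorm z^2\lesssim\mu^{-1}\|\bft\|_0^2+\mu\hnorm v^2$, and dividing yields \eqref{stab2}. I expect no serious obstacle beyond the parameter bookkeeping; the one genuinely discrete point is that $\varepsilon(v)$ need not lie in $\Sh$, so the continuous choice $\bft-\gamma\varepsilon(v)$ must be replaced by the admissible correction $\bft+\gamma\xi$, and the broken norm $\hnorm\cdot$ — jump and boundary terms included — is exactly the quantity that the first lemma controls via $(\div\xi,\cdot)$.
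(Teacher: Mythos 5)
Your proposal is correct and follows essentially the same route as the paper, which proves this theorem only by the remark that it is obtained ``in analogy with the proof of Theorem~\ref{incompstab}'' using the two preceding lemmas: your discrete test pair $(\bft+\gamma\xi,\,-v+\delta w)$, with $\xi$ from the constructive proof of \eqref{stability2} and $w$ from the second lemma, is exactly the intended discrete replacement for the continuous corrections $-\gamma\varepsilon(v)$ and $-\delta z$. The parameter choices $\delta\sim\beta/(\mu\dim)$, $\gamma\sim\mu$ and the bound on the denominator all go through as you describe.
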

 
 We then get the a priori estimate. Here $f_h$ is any piecewise polynomial approximation of $f$.
\begin{theorem} \label{Theo3} It holds that
\begin{equation}
\begin{aligned}
\snorm{&\bfs-\bfs_h} +\mu^{1/2}  \hnorm{ P_h u-\ph }
\\ &\lesssim
\mu^{-1/2} \big(\inf_{\tau \in \Sh^g }\Vert \bfs-\tau \Vert_0 +\big(  \sum_{K\in \Ch} h_K^2 \Vert f- f_h\Vert_{0,K}^2     \big)^{1/2}\big).
  \end{aligned}
  \end{equation}
\end{theorem}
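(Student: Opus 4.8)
The plan is to proceed in two stages: first prove a quasi-optimal estimate for the stress alone using the stability estimate \eqref{stab2} together with the consistency of the method, and then bootstrap to the superconvergence estimate for $\hnorm{P_h u - u_h}$. For the stress part, I would fix an arbitrary $\tau \in \Sh^g$ and write $\bfs_h - \tau \in \Sh^0$ (both have the same normal trace $Q_h g$ on $\Gamma_N$), so that $(\bfs_h - \tau, P_h u - u_h) \in \Sh^0 \times \Vh$ is an admissible argument in \eqref{stab2}. By \eqref{stab2} there exists $(\eta, z) \in \Sh^0 \times \Vh$ with $\mu^{-1}\|\eta\|_0^2 + \mu\hnorm{z}^2 \lesssim 1$ realizing the supremum up to a constant, and
\[
\snorm{\bfs_h - \tau} + \mu^{1/2}\hnorm{P_h u - u_h} \lesssim \M(\bfs_h - \tau, P_h u - u_h; \eta, z).
\]
The key is then to expand $\M(\bfs_h - \tau, P_h u - u_h; \eta, z) = \M(\bfs_h, u_h; \eta, z) - \M(\tau, P_h u; \eta, z)$ and to replace the first term by $-(f, z)$ via the discrete equations. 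For the continuous solution, $\M(\bfs, u; \eta, z) = -(f, z)$ as well (using $\div \bfs + f = 0$ and that $\bfs n = g$ is tested against $\Sh^0$), so up to the data-approximation term I can insert $\M(\bfs, u; \eta, z)$, obtaining $\M(\bfs - \tau, P_h u - u_h; \eta, z) + (\text{oscillation})$, and further $\M(\bfs - \tau, u - u_h; \eta, z) + \M(0, P_h u - u; \eta, z)$. The term with $P_h u - u$ is handled by \eqref{projpro}: $(P_h u - u, \div \eta) = 0$ since $\eta \in \Sh^0$; here is where the JM exception is absorbed by the modified projection noted after \eqref{projpro}.

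The remaining piece is $\M(\bfs - \tau, u - u_h; \eta, z) = (\comp(\bfs - \tau), \eta) + (u - u_h, \div \eta) + (z, \div(\bfs - \tau))$. The first summand is bounded by $\mu^{-1}\|\bfs - \tau\|_0 \cdot \mu^{-1}\|\eta\|_0$ using \eqref{cbound}-type estimates, giving the desired $\snorm{\bfs - \tau}$ contribution. For the term $(z, \div(\bfs - \tau))$ I integrate by parts elementwise; since $\bfs$ has continuous normal traction across interfaces and satisfies the equilibrium equation, while $z$ lies only in $\Vh$, the jump contributions of $z$ across $\Gamma_h$ and on $\Gamma_D$ appear, and these are controlled by $\hnorm{z}$ together with $\|f - f_h\|$ scaled by $h_K$ — this is exactly \emph{Gudi's trick}: one does not ask $\bfs \in H^s$ with $s > 1/2$, but instead estimates the consistency error directly through the broken norm and local data oscillation. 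The term $(u - u_h, \div \eta)$ is treated similarly, using $\div \eta \in \Vh$ (or the JM substitute) so that only $(P_h u - u_h, \div \eta)$ survives, which is already on the left-hand side after rearrangement, or is absorbed.

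The main obstacle is the careful treatment of the consistency term $(z, \div(\bfs - \tau))$ and the symmetric term involving $\div \eta$: one must integrate by parts on each element, collect the interface and Dirichlet-boundary jump terms of the discrete test function, and bound them by a combination of $\hnorm{z}$ (resp. $\snorm{\eta}$) and the weighted data oscillation $\big(\sum_K h_K^2\|f - f_h\|_{0,K}^2\big)^{1/2}$, using trace and inverse inequalities together with the scaling of $h_E$; getting all the $\mu$-powers to line up so that the final bound reads $\mu^{-1/2}(\dots)$ requires keeping track of the weights $\mu^{-1/2}$ on the stress norm and $\mu^{1/2}$ on the displacement norm throughout. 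Once the stress estimate is in hand, the superconvergence bound for $\mu^{1/2}\hnorm{P_h u - u_h}$ follows from the same inequality since both terms sit on the left of \eqref{stab2}; no separate argument is needed beyond bounding $\snorm{\bfs - \bfs_h} \le \snorm{\bfs - \tau} + \snorm{\tau - \bfs_h}$ and taking the infimum over $\tau \in \Sh^g$.
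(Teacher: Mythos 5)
Your overall architecture --- applying the stability estimate \eqref{stab2} to $(\bfs_h-\tau,\,u_h-P_h u)$, using consistency to pass to the exact solution, invoking the projection property \eqref{projpro} to annihilate $(u-P_hu,\div\eta)$, and bounding $(\comp(\bfs-\tau),\eta)$ directly --- is exactly the paper's. The genuine gap is in your treatment of the remaining term $(\div(\bfs-\tau),z)$. You propose to integrate by parts elementwise and to control the resulting interface contributions, i.e.\ pairings of the normal trace $(\bfs-\tau)n$ with the jumps of $z$, by $\hnorm{z}$. But that is precisely the step that forces the unwanted regularity hypothesis: for $\bfs\in\Hdiv$ only, $\bfs n$ lives in $H^{-1/2}(\partial K)$, cannot be restricted to individual faces, and cannot be measured in $L^2(E)$ unless $\bfs\in H^s$ with $s>1/2$. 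So the mechanism you describe is the one Gudi's trick is designed to \emph{replace}, not the trick itself, and your claim that it avoids the $H^s$ assumption does not hold as written. It is also unclear where the oscillation term $h_K\Vert f-f_h\Vert_{0,K}$ would arise in your version, since face jump terms do not produce it.

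The paper's argument never takes a trace of $\bfs-\tau$. It decomposes $z=\Ioh z+(z-\Ioh z)$ using the Oswald (enriching) interpolant $\Ioh z\in\VV$ with the estimate \eqref{oswald}. For the conforming part one integrates by parts globally, $(\div(\bfs-\tau),\Ioh z)=-(\bfs-\tau,\nabla\Ioh z)\lesssim\Vert\bfs-\tau\Vert_0\,\hnorm{z}$, and no interface terms appear. For the nonconforming part one stays in the volume, $(\div(\bfs-\tau),z-\Ioh z)_K\le\Vert\div(\bfs-\tau)\Vert_{0,K}\,\Vert z-\Ioh z\Vert_{0,K}$, and then invokes the a posteriori (bubble-function) efficiency bound $h_K\Vert\div(\bfs-\tau)\Vert_{0,K}\lesssim\Vert\bfs-\tau\Vert_{0,K}+h_K\Vert f-f_h\Vert_{0,K}$; this is where the oscillation term enters and this is the actual content of Gudi's trick. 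You need to replace your elementwise integration by parts with this two-part decomposition. (A minor slip besides: the first term obeys $(\comp(\bfs-\tau),\eta)\lesssim\mu^{-1}\Vert\bfs-\tau\Vert_0\Vert\eta\Vert_0=\snorm{\bfs-\tau}\,\snorm{\eta}$, not $\mu^{-1}\Vert\bfs-\tau\Vert_0\cdot\mu^{-1}\Vert\eta\Vert_0$.)
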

\begin{proof}
  By the stability there exist
  $(\eta, z) \in \Sh^0\times V_h$ with
  \begin{equation}\label{normal}
  \snorm{ \eta }+\mu^{1/2} \hnorm{z} =1,
  \end{equation}
   such that for all
  $\bft \in \Sh^g$ we have
\begin{equation}
\snorm{  \bfs_h-\bft}+\mu^{1/2} \hnorm{P_h u-u_h }\lesssim \M(    \bfs_h-\bft, u_h -P_h u   ; \eta , z).
\end{equation}
By the consistency it holds
\begin{equation}
\M(    \bfs_h-\bft, u_h -P_h u   ; \eta , z)= \M(    \bfs-\bft, u -P_h u   ; \eta , z).
\end{equation}
Writing out gives
\begin{equation}
\M(    \bfs-\bft, u -P_h u   ; \eta , z)=  (\comp( \bfs-\bft) ,  \eta) +  (u -P_h u,\div \eta) + (\div (\bfs-\bft), z) .
\end{equation}
From \eqref{cbound} and \eqref{normal} it follows
\begin{equation}
 (\comp( \bfs-\bft),  \eta)\lesssim \snorm{\bfs-\bft }  \,\snorm{   \eta } \lesssim  \snorm{  \bfs-\bft  } .
\end{equation}
 By the property \eqref{projpro} the second term vanishes
 \begin{equation}
  (u -P_h u,\div \eta) =0.
\end{equation}
Let $\Ioh z $ be the so called Oswald interpolant to $z$, for  
wich it holds \cite{DiPietroErn}
\begin{equation}\label{oswald}
 \Vert   \nabla  \Ioh   z\Vert _0\ + \Big(  \sum_{K\in \Ch}   h_K^{-2} \Vert  z-\Ioh z\Vert _{0,K}^2 \Big)^{1/2} \lesssim \hnorm{z}.
\end{equation}
Using this we obtain
\begin{equation}
(\div(\bfs-\bft), z) = (\div(\bfs-\bft),  z-\Ioh z)  +  (\div(\bfs-\bft), \Ioh z).
\end{equation}
The first term above we treat as follows. First, \eqref{oswald} and \eqref{normal} yield
\begin{equation}
\begin{aligned}
 (\div&(\bfs-\bft),  z-\Ioh z)    =\sum_{K\in \Ch}  (\div(\bfs-\bft),  z-\Ioh z)_K
 \\
 & \leq 
 \sum_{K\in \Ch}  \Vert \div(\bfs-\bft)\Vert_{0,K} \Vert  z-\Ioh z\Vert _{0,K}
 \\ &\leq \Big( \mu^{-1} \sum_{K\in \Ch}  h_K^2 \Vert \div(\bfs-\bft)\Vert_{0,K}^2  \Big) ^{1/2} \Big(  \mu \sum_{K\in \Ch}   h_K^{-2} \Vert  z-\Ioh z\Vert _{0,K}^2 \Big)^{1/2}
 \\
& \lesssim
 \Big( \mu^{-1} \sum_{K\in \Ch}  h_K^2 \Vert \div(\bfs-\bft)\Vert_{0,K}^2  \Big) ^{1/2}.
 \end{aligned}
\end{equation}
By a posteriori error analysis techniques \cite{MR3059294} we have
\begin{equation}
h_K \Vert \div(\bfs-\bft)\Vert_{0,K} \lesssim \big(\Vert \bfs-\bft\Vert_{0,K}+ h_K \Vert f-f_h\Vert_{0,K}\big)  ,
\end{equation}
and hence
\begin{equation}
 (\div(\bfs-\bft), z-\Ioh z) \lesssim
  \mu^{-1/2} \big(\Vert  \bfs-\tau \Vert_0  +\big(  \sum_{K\in \Ch} h_K^2 \Vert f- f_h\Vert_{0,K}^2\big)^{1/2} \big) ,
\end{equation}

Finally, an integration by parts, and \eqref{oswald} and \eqref{normal}, yield
\begin{equation}
\begin{aligned}
   (\div&(\bfs-\bft), \Ioh z)=-    ( \bfs-\bft , \nabla \Ioh z)\leq  \Vert \bfs-\bft\Vert_0 \Vert \nabla \Ioh z\Vert _0
   \\
   & \lesssim 
   \Vert \bfs-\bft\Vert_0 \Vert  z\Vert _h
     \lesssim \mu^{1/2}  \Vert \bfs-\bft\Vert_0.
   \end{aligned}
\end{equation} 
 Collecting  the estimates proves the claim.
\end{proof}

The above result shows that for an exact, sufficiently smooth, solution we
get (by standard arguments) the expected convergence result
\begin{align} \label{eq::sigmasmootherror}
  \| \sigma - \sigma_h\|_0 = \mathcal{O}(h^{k+1}).
\end{align}
The estimate for $ \hnorm{ P_h u-\ph }$ is a superconvergence result,
i.e. we have (again for a sufficiently smooth exact solution)
\begin{equation}\label{crude}
   \hnorm{ P_h u-\ph }= {\mathcal O}(h^{k+1}) \ \mbox{ wheras } \hnorm{ u-\ph }= {\mathcal O}(h^{k-1} ), \
\end{equation}
  except for JM for which we have  $ \hnorm{ u-\ph }= {\mathcal
  O}(h)$. This property enables the postprocessing of the solution in
  the next section.

\section{Postprocessing of the displacement} \label{sec::postprocessing}

In this section we give a two step postprocessing 
yielding a continuous displacement field with enhanced convergence
 properties. We define
 the following two spaces
\begin{align} 
  V_h^* &=  \{\, v\in \Vtwo\, \vert \ v\vert _K\in [P_{k+1}(K)]^\dim\ \forall K\in \Ch\,\}, \label{eq::globaldisplpostspace}\\
  V_h^a &=V^*_h \cap \VV \label{eq::globaldisplaverspace}.
\end{align}
Further let $P_h^*: L^2(\Omega) \to V_h^*$  denote the $L^2$
projection on $V_h^*$.

{\em Postprocessing. Step I:~} Following  \cite{MR1086845,MR1035181} we obtain a  
discontinuous displacement with an enhanced accuracy: find $u_h^* \in V_h^*$ such that 
\begin{equation}\label{pp}
\begin{aligned}
P_h u_h^ * &= u_h
\\  
(\varepsilon (u_h^*),\varepsilon (v))_K &= (\comp \bfs_h, \varepsilon (v))_K \quad \forall v\in   (I-P_h) V_h^*\vert _K.
\end{aligned}
\end{equation}

\begin{lemma}\label{discest}
It holds that
\begin{equation}
\hnorm{u-u_h^* } \lesssim \Vert u-P_h^* u \Vert_{h}
+ \mu^{-1}\Big(\Vert\bfs-\bfs_h\Vert_{0} +\big(  \sum_{K\in \Ch} h_K^2 \Vert f- f_h\Vert_{0,K}^2\big)^{1/2} \Big).
\end{equation}
\end{lemma}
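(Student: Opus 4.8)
The plan is to start from the triangle inequality
\[
\hnorm{u-u_h^*} \le \hnorm{u-P_h^* u} + \hnorm{e}, \qquad e:=P_h^* u - u_h^* \in V_h^*,
\]
and to estimate $\hnorm{e}$. First I would split $e = P_h e + (I-P_h)e$ elementwise. Since $P_h P_h^* = P_h$ and, by the first equation of \eqref{pp}, $P_h u_h^* = \ph$, one has $P_h e = P_h u - \ph$, which Theorem~\ref{Theo3} already controls (using $\inf_{\bft\in\Sh^g}\|\bfs-\bft\|_0\le\|\bfs-\bfs_h\|_0$):
\[
\hnorm{P_h e} = \hnorm{P_h u - \ph} \lesssim \mu^{-1}\Big(\|\bfs-\bfs_h\|_0 + \big(\textstyle\sum_{K\in\Ch}h_K^2\|f-f_h\|_{0,K}^2\big)^{1/2}\Big).
\]
For $(I-P_h)e$, its restriction to each $K$ is a polynomial that is $L^2(K)$-orthogonal to $[P_{k-1}(K)]^\dim$, hence (since $k\ge2$) to the rigid body motions on $K$; therefore a Korn inequality on $K$, with constant depending only on the shape regularity of $\Ch$, gives $\|(I-P_h)e\|_{0,K}\lesssim h_K\|\varepsilon((I-P_h)e)\|_{0,K}$, and combined with the inverse trace inequality $h_E^{-1}\|\cdot\|_{0,E}^2\lesssim h_K^{-2}\|\cdot\|_{0,K}^2$ this bounds the jump terms and the $\Gamma_D$-terms of $\hnorm{(I-P_h)e}$ by its element terms. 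By the triangle inequality for $\hnorm{\cdot}$ we thus arrive at
\[
\hnorm{e} \lesssim \hnorm{P_h u - \ph} + \Big(\textstyle\sum_{K\in\Ch}\|\varepsilon((I-P_h)e)\|_{0,K}^2\Big)^{1/2}.
\]

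The heart of the argument is the elementwise estimate of $v:=(I-P_h)e|_K\in(I-P_h)V_h^*|_K$, an admissible test function in \eqref{pp}. Testing the second equation of \eqref{pp} with $v$, writing $\comp\bfs_h=\varepsilon(u)+\comp(\bfs_h-\bfs)$ via the exact constitutive law $\comp\bfs=\varepsilon(u)$, and inserting the decompositions $\ph=P_hu+(\ph-P_hu)$ and $P_h^*u=P_hu+(I-P_h)P_h^*u$, the contributions containing $\varepsilon(u-P_hu)$ cancel and one obtains
\[
\|\varepsilon(v)\|_{0,K}^2 = \big(\varepsilon(P_h^*u-u),\varepsilon(v)\big)_K - \big(\comp(\bfs_h-\bfs),\varepsilon(v)\big)_K + \big(\varepsilon(\ph-P_hu),\varepsilon(v)\big)_K.
\]
Cauchy--Schwarz, the bound $\|\comp\bft\|_0\le\mu^{-1}\|\bft\|_0$ from the proof of Theorem~\ref{incompstab}, summation over $K\in\Ch$, and the elementary inequality $\big(\sum_{K}\|\varepsilon(w)\|_{0,K}^2\big)^{1/2}\le\hnorm{w}$ then yield
\[
\Big(\textstyle\sum_{K\in\Ch}\|\varepsilon((I-P_h)e)\|_{0,K}^2\Big)^{1/2} \lesssim \hnorm{u-P_h^*u} + \mu^{-1}\|\bfs-\bfs_h\|_0 + \hnorm{P_hu-\ph}.
\]
Bounding $\hnorm{P_hu-\ph}$ once more by Theorem~\ref{Theo3} and collecting the above estimates gives the claim.

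The step I expect to be the main obstacle is the elementwise identity for $\|\varepsilon(v)\|_{0,K}^2$: the test function, the continuous constitutive relation, and the two splittings have to be combined in precisely the right way so that the non-superconvergent term $\varepsilon(u-P_hu)$ drops out, leaving only $\varepsilon(u-P_h^*u)$, the scaled stress error $\mu^{-1}(\bfs-\bfs_h)$, and the already-controlled superconvergent term $\varepsilon(\ph-P_hu)$. The reduction of the jump and $\Gamma_D$ terms to element terms is routine but hinges on the scaled Korn inequality for element functions orthogonal to the rigid body motions.
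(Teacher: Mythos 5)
Your proof is correct and follows essentially the same route as the paper's: the same splitting $P_h^*u-u_h^*=(P_h^*-P_h)(u-u_h^*)+(P_hu-\ph)$, the same elementwise identity obtained by testing the second equation of \eqref{pp} with $(I-P_h)P_h^*(u-u_h^*)$ together with $\comp\bfs=\varepsilon(u)$, and Theorem~\ref{Theo3} to absorb the superconvergent term $\hnorm{P_hu-\ph}$. The only difference is cosmetic: you spell out explicitly the elementwise Korn/trace scaling argument (orthogonality to rigid body motions) that the paper dispatches with ``by scaling arguments.''
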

\begin{proof} By the triangle inequality we have
\begin{equation}
\hnorm{u-u_h^* }\leq \hnorm{P_h^* u-u }+\hnorm{P_h^* u-u_h^* }.
\end{equation}
 Next, we write
\begin{equation}
\begin{aligned}
P_h^* u-u_h^* =P_h^*( u-u_h^*)&=(P_h^*-P_h) (u-u_h^*) + P_h(u-u_h^*)
\\ 
&= (P_h^*-P_h) (u-u_h^*) + (P_h u-u_h).
\end{aligned}
\end{equation}
For convenience we denote
\begin{equation}
v= (P_h^*-P_h) (u-u_h^*)\in (I-P_h) V_h^*\vert _K .
\end{equation}
From \eqref{pp} we get
\begin{equation}
\begin{aligned}
\Vert \varepsilon\big((P_h^*-P_h) &(u-u_h^*)\big)\Vert_{0,K}^2 \\
&=   \big(\varepsilon( (P_h^*-P_h) (u-u_h^*)),    \varepsilon(v)   \big)_{0,K}
\\ 
&= \big(\varepsilon( P_h^* (u-u_h^*),    \varepsilon(v)   \big)_{0,K} - \big(\varepsilon( P_h (u-u_h^*)),    \varepsilon(v)   \big)_{0,K}
\end{aligned}
\end{equation}
Next, using  $P_h^*u_h^*=u_h^*$, $\varepsilon (u) -\comp \bfs=0$, and \eqref{pp} gives 
\begin{equation}
\begin{aligned}
 \big(\varepsilon(  P_h^* (u-u_h^*),    \varepsilon(v)   \big)_{0,K}
  &=  \big(\varepsilon( P_h^* u)-\varepsilon(u_h^*),    \varepsilon(v)   \big)_{0,K}
  \\& =
   \big(\varepsilon( P_h^* u)-\comp \bfs_h,    \varepsilon(v)   \big)_{0,K}
   \\  
  & =\big(\varepsilon( P_h^* u-u),    \varepsilon(v)   \big)_{0,K}+  \big(\comp(\bfs- \bfs_h),    \varepsilon(v)   \big)_{0,K}.
   \end{aligned}
\end{equation}
Combining, we get
\begin{equation}
\Vert (\varepsilon\big(P_h^*-P_h) (u-u_h^*)\big)\Vert_{0,K}\lesssim \big(  \Vert u-P_h^*u\Vert_{0,K} +  \mu^{-1} \Vert \bfs-\bfs_h\Vert_{0,K}  \big) .
\end{equation}
By scaling arguments we have
\begin{equation}
\Vert (P_h^*-P_h) (u-u_h^*)\Vert_{h}\lesssim \Big(\sum_{K\in \Ch} \Vert \varepsilon\big( (P_h^*-P_h) (u-u_h^*)\big) \Vert_{0,K}^2\Big)^{1/2}.
\end{equation}
Combining the estimates gives the claim.
\end{proof}

  {\em Postpostprocessing. Step II: ~} The second step is used to
  derive the final continuous displacement approximation (used for the
  hypercircle technique below) by applying an averaging operator $\Ioh
  :V_h^* \to V_h^a$. Now let  $\uoh = \Ioh u_h^*$, then we have the
  following error estimate.
\begin{theorem} \label{Theo4}  It holds that
\begin{equation}
\begin{aligned}
\mu^{1/2} \norm{\varepsilon(u-&u_h^a) }_0 
  \lesssim \mu^{1/2} \Vert u-P_h^* u \Vert_{h}
\\
&+\mu^{-1/2} \big(\inf_{\tau \in \Sh^g }\Vert \bfs-\tau \Vert_0 +\big(  \sum_{K\in \Ch} h_K^2 \Vert f- f_h\Vert_{0,K}^2     \big)^{1/2}\big).
\end{aligned}
\end{equation}
\end{theorem}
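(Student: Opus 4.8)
The plan is to use the triangle inequality to split the error into the contribution from Step I and the contribution of the averaging operator $\Ioh$ in Step II, and then bound each piece by already-established estimates. First I would write
\begin{equation*}
\mu^{1/2}\norm{\varepsilon(u-u_h^a)}_0 \le \mu^{1/2}\Big(\sum_{K\in\Ch}\norm{\varepsilon(u-u_h^*)}_{0,K}^2\Big)^{1/2} + \mu^{1/2}\Big(\sum_{K\in\Ch}\norm{\varepsilon(u_h^*-u_h^a)}_{0,K}^2\Big)^{1/2}.
\end{equation*}
The first term is exactly $\mu^{1/2}\hnorm{u-u_h^*}$ up to the jump and boundary contributions, which vanish (or are controlled) since $u$ is continuous and satisfies the Dirichlet condition; applying Lemma~\ref{discest} bounds it by $\mu^{1/2}\Vert u-P_h^* u\Vert_h + \mu^{-1/2}\big(\Vert\bfs-\bfs_h\Vert_0 + (\sum_K h_K^2\Vert f-f_h\Vert_{0,K}^2)^{1/2}\big)$, and then Theorem~\ref{Theo3} turns $\Vert\bfs-\bfs_h\Vert_0$ into the infimum over $\tau\in\Sh^g$, giving precisely the right-hand side of the claim (using $\mu^{-1}\le\mu^{-1/2}\cdot$const is not needed — the powers already match).

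For the second term, the standard approximation property of the Oswald / averaging operator $\Ioh:V_h^*\to V_h^a$ gives, elementwise, that $\norm{\varepsilon(u_h^*-u_h^a)}_{0,K}$ is controlled by the jumps of $u_h^*$ across the faces of $K$ and its boundary values on $\Gamma_D$, scaled by $h_E^{-1/2}$; summing over $K$ yields
\begin{equation*}
\Big(\sum_{K\in\Ch}\norm{\varepsilon(u_h^*-u_h^a)}_{0,K}^2\Big)^{1/2} \lesssim \Big(\sum_{E\in\Gamma_h}h_E^{-1}\norm{\jump{u_h^*}}_{0,E}^2 + \sum_{E\subset\Gamma_D}h_E^{-1}\norm{u_h^*}_{0,E}^2\Big)^{1/2}.
\end{equation*}
Since $u$ is continuous and vanishes on $\Gamma_D$, one can insert $u$ into each jump and boundary term for free: $\jump{u_h^*}=\jump{u_h^*-u}$ on interior faces and $u_h^*=u_h^*-u$ on $\Gamma_D$. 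Therefore the right-hand side above is bounded by $\hnorm{u-u_h^*}$, which is again controlled by Lemma~\ref{discest} and Theorem~\ref{Theo3} as in the first term. Collecting the two contributions and multiplying through by $\mu^{1/2}$ gives the asserted estimate.

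The only mildly delicate point is making sure the jump/boundary terms of $\hnorm{u-u_h^*}$ are genuinely present and bounded — i.e. that Lemma~\ref{discest}, whose statement is phrased with $\hnorm{u-u_h^*}$ on the left, really does control those face terms and not merely the broken $\varepsilon$-seminorm. Reading its proof, the scaling argument bounds the full $\hnorm{\cdot}$ by the sum of elementwise $\varepsilon$-norms of $(P_h^*-P_h)(u-u_h^*)$ plus $\hnorm{P_h^*u - u}$, so the face terms are indeed included; thus no new work is needed there. I expect the main obstacle to be purely bookkeeping: tracking the powers of $\mu$ and $h_K$ correctly through the chain Lemma~\ref{discest} $\to$ Theorem~\ref{Theo3}, and verifying that the averaging-operator estimate for $\Ioh$ on the higher-order space $V_h^*$ has the stated stability constant independent of $\mu$ and $h$ — which it does, since $\Ioh$ acts only on the mesh geometry and polynomial degree.
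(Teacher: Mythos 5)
Your proposal is correct and follows essentially the same route as the paper: a triangle inequality splitting off the averaging step, the Oswald-operator estimate bounding $\varepsilon(u_h^*-u_h^a)$ by the jumps of $u_h^*$, the insertion of the continuous $u$ into those jumps so that everything reduces to $\hnorm{u-u_h^*}$, and then Lemma~\ref{discest} combined with Theorem~\ref{Theo3}. The only cosmetic difference is that you bound the elementwise $\varepsilon$-norms of $u_h^*-u_h^a$ directly, whereas the paper passes through the full broken norm $\hnorm{u_h^*-u_h^a}$ via the interpolation estimate and the discrete Korn inequality; both yield the same bound.
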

\begin{proof} From the interpolation estimate \cite[Theorem 2.2]{MR2034620} and the discrete Korn inequality  \cite{MR2047078}, it follows that
\begin{equation}
\begin{aligned}
\hnorm{u_h^*-u_h^a}& \lesssim \Big( \sum_{E\in \Gamma_{h }}h_E ^{-1} 
\Vert
\jump{u_h^*}\Vert_{0,E}^2\Big)^{1/2} = \Big( \sum_{E\in \Gamma_{h }}h_E ^{-1} 
\Vert
\jump{u-u_h^*}\Vert_{0,E}^2\Big)^{1/2}
\\& \leq \hnorm{u-u_h^*}   .
\end{aligned}
\end{equation}
By the triangle inequality we have
\begin{equation}
\norm{\varepsilon(u-u_h^a) }_0 \leq \hnorm{u-u_h^*}  +\hnorm{u_h^*-u_h^a} 
\end{equation}
and the claim follows from Lemma \ref{discest}.
\end{proof}

For a sufficiently smooth solution we now have 
\begin{equation} \label{eq::conv_aver}
\norm{\varepsilon(u-u_h^a) }_0= {\mathcal O}(h^{k+1} ), 
\end{equation}
 which should be compared to \eqref{crude}. 
 
 \section{A posteriori error estimates by the hypercircle Theorem} \label{sec::hypercircle}
  
  First we recall the hypercircle method \cite{ MR25902} and include its proof.
\begin{theorem} {\rm (The Prager-Synge hypercircle theorem)}   Suppose that: 

\begin{itemize}
\item The stress $\Sigma\in \Hdiv$ is 
      statically admissible;  $\div \Sigma +f=0 \mbox{ in } \Omega $, and $\Sigma n=g \mbox{ on } \Gamma_N$.
      
      \item The displacement $U\in [H^1(\Omega)]^\dim$ is kinematically admissible; $U\vert_{\Gamma_D} =0. $
\end{itemize}
  Then it holds
      \begin{equation}
\begin{aligned}
    \enorm{\bfs -\Sigma} ^2+ \enorm{\bfs -\elas \varepsilon(U)}^2   =   \enorm{\Sigma-\elas \varepsilon(U)}^2 
    \end{aligned}\end{equation}
   and
     \begin{equation}
    \begin{aligned}
    \enorm{\bfs -&\frac{1}{2} \big(\Sigma+ \elas \varepsilon(U)\big)}       = \frac{1}{2}  \enorm{\Sigma-\elas \varepsilon(U)}.
  \end{aligned}
      \end{equation}
      \end{theorem}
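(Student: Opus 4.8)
The plan is to obtain both identities from a single orthogonality relation: that the two error quantities $\bfs-\Sigma$ and $\bfs-\elas\varepsilon(U)$ are orthogonal with respect to the inner product $(\comp\,\cdot\,,\cdot)$ that defines $\enorm{\cdot}$. Everything else is then elementary expansion of squared norms, so the whole proof hinges on that one computation.

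To establish the orthogonality I would argue as follows. Since $\comp$ is symmetric, $(\comp(\bfs-\Sigma),\bfs-\elas\varepsilon(U)) = (\bfs-\Sigma,\comp(\bfs-\elas\varepsilon(U)))$. Using the constitutive law $\comp\bfs=\varepsilon(u)$ together with $\comp\elas = I$, the second argument equals $\varepsilon(u)-\varepsilon(U)=\varepsilon(u-U)$, so the quantity becomes $(\bfs-\Sigma,\varepsilon(u-U))$. Because $\bfs-\Sigma$ is symmetric, this equals $(\bfs-\Sigma,\nabla(u-U))$, and an integration by parts over $\Omega$ gives
\begin{equation}
(\bfs-\Sigma,\nabla(u-U)) = -(\div(\bfs-\Sigma),u-U) + \langle(\bfs-\Sigma)n,u-U\rangle_{\partial\Omega}.
\end{equation}
Now I invoke the admissibility hypotheses. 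Static admissibility of both $\bfs$ and $\Sigma$ gives $\div(\bfs-\Sigma)=(-f)-(-f)=0$ in $\Omega$ and $(\bfs-\Sigma)n=g-g=0$ on $\Gamma_N$; kinematic admissibility of both $u$ and $U$ gives $u-U=0$ on $\Gamma_D$. The boundary pairing is supported on $\Gamma_N\cup\Gamma_D$ and vanishes on each piece, and the volume term vanishes too, hence $(\comp(\bfs-\Sigma),\bfs-\elas\varepsilon(U))=0$.

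The two identities then follow by bookkeeping with the bilinear form $(\comp\,\cdot\,,\cdot)$. For the first, write $\Sigma-\elas\varepsilon(U)=(\Sigma-\bfs)+(\bfs-\elas\varepsilon(U))$ and expand $\enorm{\Sigma-\elas\varepsilon(U)}^2$; the cross term is the (negative of the) quantity shown to vanish above, leaving $\enorm{\Sigma-\elas\varepsilon(U)}^2=\enorm{\bfs-\Sigma}^2+\enorm{\bfs-\elas\varepsilon(U)}^2$. For the second, write $\bfs-\tfrac12(\Sigma+\elas\varepsilon(U))=\tfrac12(\bfs-\Sigma)+\tfrac12(\bfs-\elas\varepsilon(U))$, expand $\enorm{\cdot}^2$ again with the cross term dropped, and use the first identity to get $\enorm{\bfs-\tfrac12(\Sigma+\elas\varepsilon(U))}^2=\tfrac14\enorm{\Sigma-\elas\varepsilon(U)}^2$; taking square roots concludes.

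The only genuinely delicate point is making the integration by parts rigorous: $\bfs-\Sigma$ lies only in $H(\div:\Omega)$, so its normal trace $(\bfs-\Sigma)n$ is an element of $H^{-1/2}(\partial\Omega)$ and the boundary term must be read as a duality pairing with $u-U\in H^{1/2}(\partial\Omega)$; one has to check that this pairing splits consistently across $\Gamma_D$ and $\Gamma_N$ so that it can be annihilated piecewise. This is classical, but it is the step I would be careful to state explicitly. The remaining algebra is routine.
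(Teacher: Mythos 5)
Your proof is correct and follows essentially the same route as the paper: the key orthogonality $(\comp(\bfs-\Sigma),\bfs-\elas\varepsilon(U))=0$ is established exactly as in the paper (symmetry of $\comp$, the constitutive law $\comp\bfs=\varepsilon(u)$, integration by parts, and the admissibility conditions), and both identities then follow by expanding squared norms. Your derivation of the second identity—splitting $\bfs-\tfrac12(\Sigma+\elas\varepsilon(U))$ as $\tfrac12(\bfs-\Sigma)+\tfrac12(\bfs-\elas\varepsilon(U))$ and invoking the first identity—is a slightly more direct bookkeeping than the paper's substitution $\bfs=\elas\varepsilon(u)$, but rests on the same orthogonality.
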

  \begin{proof}
We  have
\begin{equation}
\begin{aligned}
 \enorm{ \Sigma-\elas \varepsilon(U)}^2 &= \enorm{ \Sigma-\bfs + \bfs-\elas \varepsilon(U)}^2 
 \\ &= \enorm{ \Sigma-\bfs  }^2  + \enorm{  \bfs-\elas \varepsilon(U)}^2  +  ( \comp(  \Sigma-\bfs) , \bfs-\elas \varepsilon(U) ) . 
 \end{aligned}
\end{equation}
Next,  the symmetry of $\comp$, $ \elas=\comp^{-1} $, and $\varepsilon(u) =\comp \bfs$, give
\begin{equation}
\begin{aligned}
( \comp(  \Sigma-\bfs) , \bfs-\elas \varepsilon(U) ) &=(   \Sigma-\bfs , \comp(\bfs-\elas \varepsilon(U) ))
\\& = (   \Sigma-\bfs ,  \varepsilon(u)- \varepsilon(U) ) .
\end{aligned}
\end{equation}
An integration by parts yields
\begin{equation}\label{orthogonality}
\begin{aligned}
(   \Sigma-\bfs ,  \varepsilon(u)- \varepsilon(U) )) &= -(  \div( \Sigma-\bfs) ,  u-U) ) +\langle    ( \Sigma-\bfs)n, u-U\rangle_{\Gamma_N} 
\\&=  -(  \div \Sigma+f ,  u-U) ) +\langle    ( \Sigma n-g, u-U\rangle_{\Gamma_N}=0,
\end{aligned}
\end{equation}
which proves the first identity. 

Next, the orthogonality \eqref{orthogonality} also yields 
\begin{equation}
\begin{aligned}
\enorm{  \bfs -\frac{1}{2} \big( \Sigma +&\elas \varepsilon(U)  \big) } ^2 \\
&=\enorm{  \elas \varepsilon(u) - \frac{1}{2} \big( \Sigma +\elas \varepsilon(U)  \big) } ^2
\\&= \frac{1}{4 }\enorm{ 2(  \elas \varepsilon(u)  -\elas \varepsilon(U))) +(\elas \varepsilon(U))-\Sigma}^2
\\&
=  \enorm{   \elas \varepsilon(u)  -\elas \varepsilon(U)}^2 + ( \comp(    \elas \varepsilon(u)  -\elas \varepsilon(U) ),   \elas \varepsilon(U)-\Sigma) 
\\
&\qquad +\frac{1}{4 }\enorm{ \elas \varepsilon(U)-\Sigma}^2
\\&
 =( \comp(    \elas \varepsilon(u)  -\elas \varepsilon(U) ),   \elas \varepsilon(u)-\Sigma) 
+\frac{1}{4 }\enorm{ \elas \varepsilon(U)-\Sigma}^2
\\&
=(  \varepsilon(u)  -\varepsilon(U) ,   \elas \varepsilon(u)-\Sigma) 
+\frac{1}{4 }\enorm{ \elas \varepsilon(U)-\Sigma}^2
\\&
= \frac{1}{4 }\enorm{ \elas \varepsilon(U)-\Sigma}^2.
\end{aligned}
\end{equation}
\hfill    \end{proof}
   
  Let $P_K=P_h\vert_K$, where $P_h$ is the projection \eqref{projpro}. 
  The a posteriori estimate is now.
\begin{theorem} \label{th::estimator}
It holds that
\begin{equation}
\begin{aligned}
    \enorm{\bfs -\bfs_h} ^2+ \enorm{\bfs -\elas \varepsilon(\uoh)}^2   \lesssim  \enorm{\bfs_h-\elas \varepsilon(\uoh)}^2 +osc(f)^2 + osc(g)^2 .
    \end{aligned}\end{equation}
   and
     \begin{equation}\label{miracle}
    \begin{aligned}
    \enorm{\bfs -&\frac{1}{2} \big(\bfs_h+ \elas \varepsilon(\uoh)\big)}       \leq\frac{1}{2}  \enorm{\bfs_h-\elas \varepsilon(\uoh)}
    +osc(f)+osc(g)
  \end{aligned}
    \end{equation}
    with 
  \begin{equation}\label{oscf}  osc(f)=C \big(  \sum_{K\in \Ch} h_K^2 \Vert f-P_K f \Vert_{0,K}^2     \big)^{1/2}
  \end{equation}
  and 
  \begin{equation}\label{oscg}
  osc(g)=C\big(  \sum_{E\subset \Gamma_N} h_E \Vert  g-Q_E g   \Vert_{0,E}^2\big)^{1/2}.
  \end{equation}
  \end{theorem}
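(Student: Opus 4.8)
The plan is to reduce the claim to the \emph{exact} Prager--Synge identity proved above, by perturbing the discrete stress $\bfs_h$ into a genuinely statically admissible field whose distance to $\bfs_h$ is controlled by the oscillation terms. The postprocessed displacement already provides the required kinematically admissible field, since $\uoh=\Ioh u_h^*\in V_h^a\subset\VV$ and hence $\uoh|_{\Gamma_D}=0$.

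\emph{Step 1 (equilibrated correction).} I would construct a symmetric $\hat\bfs_h=\bfs_h+\bfs_\Delta\in\Hdiv$ with $\div\hat\bfs_h+f=0$ in $\Omega$ and $\hat\bfs_h n=g$ on $\Gamma_N$. Testing the method with $v\in\Vh$ and using \eqref{projpro}, the equilibrium residual $f+\div\bfs_h$ is the data oscillation $f-P_hf$ entering \eqref{oscf} (literally so for every family but JM, and up to a uniform constant for JM via the projection of \cite{JM,PS}), while $\bfs_h n=Q_hg$ on $\Gamma_N$. Thus $\bfs_\Delta$ must satisfy, elementwise, $\div\bfs_\Delta=-(f+\div\bfs_h)$ in $K$, $\bfs_\Delta n=g-Q_hg$ on faces $E\subset\Gamma_N$, and $\bfs_\Delta n=0$ on all remaining faces (interior faces, so that $\hat\bfs_h\in\Hdiv$, and $\Gamma_D$-faces). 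I build it element by element: on each $K$ solve the local pure-traction elasticity problem for $w_K\in[H^1(K)]^\dim$ orthogonal to the rigid motions,
\begin{equation*}
(\varepsilon(w_K),\varepsilon(v))_K=(f+\div\bfs_h,v)_K+\langle g-\bfs_h n,v\rangle_{\partial K\cap\Gamma_N}\qquad\forall\,v\in[H^1(K)]^\dim,
\end{equation*}
and set $\bfs_\Delta|_K:=\varepsilon(w_K)$. Integrating by parts in the left-hand side identifies the divergence and normal traces of $\bfs_\Delta$ as prescribed, and since those traces vanish on interior faces the pieces assemble into an $\Hdiv$ field.

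\emph{Step 2 (solvability and stability of the local problems --- the main obstacle).} These problems are well posed precisely because their data annihilate the rigid motions on $K$: $f+\div\bfs_h$ is $L^2(K)$-orthogonal to $[P_{k-1}(K)]^\dim$, which contains the rigid motions since $k-1\ge1$, by construction of $\bfs_h$; and $g-Q_Eg$ is $L^2(E)$-orthogonal to $[P_k(E)]^\dim$, which contains the boundary traces of rigid motions since $k\ge1$. Taking $w_K$ orthogonal to the rigid motions, testing with $v=w_K$, and invoking Korn's and Poincar\'e's inequalities together with a scaled trace inequality on $K$, one gets
\begin{equation*}
\|\bfs_\Delta\|_{0,K}\;\lesssim\; h_K\|f+\div\bfs_h\|_{0,K}+\sum_{E\subset\partial K\cap\Gamma_N}h_E^{1/2}\|g-Q_Eg\|_{0,E};
\end{equation*}
squaring, summing over the shape-regular family, and using $\enorm{\bft}\lesssim\mu^{-1/2}\|\bft\|_0$ (a consequence of \eqref{cbound}) gives $\enorm{\bfs_h-\hat\bfs_h}=\enorm{\bfs_\Delta}\lesssim osc(f)+osc(g)$. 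The delicate points are the rigid-motion compatibility bookkeeping and, above all, this uniform $h$-weighted bound, which requires a reference-element/Korn argument carried out with the correct powers of $h_K$; the remainder of the argument is soft.

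\emph{Step 3 (apply Prager--Synge and remove the perturbation).} With $\hat\bfs_h$ statically admissible and $\uoh$ kinematically admissible, the Prager--Synge theorem yields the exact identities $\enorm{\bfs-\hat\bfs_h}^2+\enorm{\bfs-\elas\varepsilon(\uoh)}^2=\enorm{\hat\bfs_h-\elas\varepsilon(\uoh)}^2$ and $\enorm{\bfs-\tfrac{1}{2}(\hat\bfs_h+\elas\varepsilon(\uoh))}=\tfrac{1}{2}\enorm{\hat\bfs_h-\elas\varepsilon(\uoh)}$. For the first assertion, the triangle inequality gives $\enorm{\bfs-\bfs_h}^2\lesssim\enorm{\bfs-\hat\bfs_h}^2+\enorm{\bfs_\Delta}^2$ and $\enorm{\hat\bfs_h-\elas\varepsilon(\uoh)}^2\lesssim\enorm{\bfs_h-\elas\varepsilon(\uoh)}^2+\enorm{\bfs_\Delta}^2$; substituting the first identity, absorbing $\enorm{\bfs-\elas\varepsilon(\uoh)}^2$, and using Step 2 gives the stated $\lesssim$-bound. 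For \eqref{miracle},
\begin{equation*}
\enorm{\bfs-\tfrac{1}{2}(\bfs_h+\elas\varepsilon(\uoh))}\le\enorm{\bfs-\tfrac{1}{2}(\hat\bfs_h+\elas\varepsilon(\uoh))}+\tfrac{1}{2}\enorm{\bfs_\Delta}=\tfrac{1}{2}\enorm{\hat\bfs_h-\elas\varepsilon(\uoh)}+\tfrac{1}{2}\enorm{\bfs_\Delta}\le\tfrac{1}{2}\enorm{\bfs_h-\elas\varepsilon(\uoh)}+\enorm{\bfs_\Delta},
\end{equation*}
and Step 2 turns $\enorm{\bfs_\Delta}$ into $osc(f)+osc(g)$ --- the sharp factor $\tfrac{1}{2}$ surviving exactly because one perturbs to an honestly equilibrated $\hat\bfs_h$ rather than estimating the defect of the hypercircle identity for $\bfs_h$ directly.
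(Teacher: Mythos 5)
Your argument is correct, but it takes a genuinely different route from the paper. The paper keeps $\bfs_h$ as it is and instead perturbs the \emph{continuous problem}: it introduces the auxiliary solution $(\bar\bfs,\bar u)$ of the elasticity problem with the projected data $P_hf$ and $Q_hg$, for which $\bfs_h$ is exactly statically admissible and $\uoh$ kinematically admissible, applies the hypercircle identity to $(\bar\bfs,\bar u)$, and then controls $\enorm{\bfs-\bar\bfs}$ by the continuous stability estimate with residual data $f-P_hf$, $g-Q_hg$, which produces the oscillation terms. You instead perturb the \emph{discrete stress}: you build an equilibrated correction $\bfs_\Delta$ by local pure-traction problems on each element, so that $\hat\bfs_h=\bfs_h+\bfs_\Delta$ is statically admissible for the original data, and then apply Prager--Synge directly to $(\bfs,u)$. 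Both routes are sound; your compatibility bookkeeping (rigid motions lie in $[P_{k-1}(K)]^\dim$ and their traces in $[P_k(E)]^\dim$, so the elementwise and facewise residuals annihilate them) is the right justification for the solvability of the local Neumann problems, and the $h$-weighted bound $\Vert\bfs_\Delta\Vert_{0,K}\lesssim h_K\Vert f+\div\bfs_h\Vert_{0,K}+\sum_E h_E^{1/2}\Vert g-Q_Eg\Vert_{0,E}$ follows from Korn/Poincar\'e on the reference element exactly as you indicate. What the paper's route buys is brevity: no local correctors, only one global stability bound, and the oscillation appears through a duality/stability argument. What your route buys is constructiveness and locality: $\hat\bfs_h$ is an explicitly computable equilibrated stress, the oscillation contributions are localized elementwise, and in principle one obtains a fully guaranteed bound with computable constants, with the factor $\tfrac{1}{2}$ in \eqref{miracle} preserved for the same reason as in the paper. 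Two caveats, both of which you already flag and both of which the paper shares: for JM the identification of $f+\div\bfs_h$ with $f-P_Kf$ is not literal and rests on the special projection property of the composite element, and the passage from $\Vert\bfs_\Delta\Vert_0$ to $\enorm{\bfs_\Delta}$ (respectively, from $\anorm{\eps(v)}$ to $\Vert\nabla v\Vert_0$ in the paper's \eqref{ss}) carries a factor $\mu^{-1/2}$ that is silently absorbed into the constant $C$ of \eqref{oscf}--\eqref{oscg}.
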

\begin{proof} Let $(\bar \bfs,\bar u) \in \HH\times \VV$ be the
solution to 
\begin{equation}
\B (\bar \bfs, \bar u; \bft, v) =(P_hf,v) + \langle  Q_h g, v \rangle_{\Gamma_N}  \quad \forall(\bft,v) \in \HH\times \VV.
\end{equation}
Now $(\sigma_h, u_h)$ are admissible approximations to $( \bar \bfs,
 \bar u)$ and the preceding theorem yields 
\begin{equation}
\begin{aligned}
    \enorm{\bar\bfs -\bfs_h }^2+ \enorm{\bar \bfs -\elas \varepsilon(\uoh)}^2   \leq   \enorm{\bfs_h-\elas \varepsilon(\uoh)}^2 
    \end{aligned}
    \end{equation}
   and
     \begin{equation}
    \begin{aligned}
    \enorm{\bar\bfs -&\frac{1}{2} \big(\bfs_h+ \elas \varepsilon(\uoh)\big)}       \leq\frac{1}{2}   \enorm{\bfs_h-\elas \varepsilon(\uoh)}.
  \end{aligned}
      \end{equation}
      For the difference $(\bfs- \bar \bfs, u- \bar u) $ we get
\begin{equation}
\B (\bfs- \bar \bfs, u- \bar u; \bft, v) =(f-P_hf,v) + \langle  g-Q_h g, v \rangle_{\Gamma_N} .
\end{equation}
The stability then yield
\begin{equation}
\enorm{\bfs- \bar \bfs} +\anorm{\eps(u-\bar u)} \lesssim \sup_{\anorm{\eps(v)}=1} \big(     (f-P_hf,v) + \langle  g-Q_h g, v \rangle_{\Gamma_N}     \big).
\end{equation}
By the properties of the projection operators and Korn's inequality we have
\begin{equation}\label{ss}
\begin{aligned}
(f-&P_h f,  v) =(f-P_h f,  v-P_hv)
\\& = \sum_{K\in \Ch}  (f-P_K f, v-P_Kv)_K 
\\&  \leq \sum_{K\in \Ch}  \Vert f-P_K f\Vert_{0,K} \Vert v-P_K v\Vert_{0,K }
\\ &\lesssim\sum_{K\in \Ch} h_K  \Vert f-P_K f\Vert_{0,K} \Vert  \nabla v\Vert_{0,K }
\\&\lesssim \big( \sum_{K\in \Ch} h_K ^2 \Vert f-P_K f\Vert_{0,K} ^2 \big)^{1/2} \Vert \nabla v\Vert_0.
\\&\lesssim 
  \big( \sum_{K\in \Ch} h_K ^2 \Vert f-P_K f\Vert_{0,K} ^2 \big)^{1/2} \anorm{\eps(v)}. 
\end{aligned}
\end{equation}
 Using the trace theorem, a similar estimate also gives 
\begin{equation}\label{ww}
\langle  (g-Q_h g, v\rangle_{\Gamma_N} \lesssim \big( \sum_{E\subset \Gamma_N} h_E \Vert g-Q_K g \Vert_{0,E} ^2 \big)^{1/2} \anorm{\eps(v)}. 
\end{equation}
The assertion then follows by combining the above estimates. 
\end{proof}

\begin{remark} For $f$ and $g$ smooth, \eqref{oscf}, \eqref{eq::globaldisplspace},  and \eqref{oscg}, \eqref{Emom},  yield
\begin{equation}
osc(f) = \mathcal{O}(h^{k+1}) \mbox{ and } osc(g) = \mathcal{O}(h^{k+3/2}),
\end{equation}
respectively. 
Hence,  only $osc(g)$ is a higher order term. However, in most engineering problems, the loading $f$ is given by the gravity, i.e. it is is a constant, and then $osc(f)$ vanish.

Note also that when the oscillation terms vanish, the estimates become
equalities.
\end{remark}

  \section{An a   posteriori estimator uniformly valid in the incompressible limit}
  
  The drawback of the estimate by  the hypercircle argument is that it
 is formulated in terms of $\enorm{\cdot}$ which, unfortunately,
 ceases to be a norm in the incompressible limit $\lambda\to \infty$.
Then  the stress computed from the displacement, i.e.
   \begin{equation}  \label{compstress} 
 \elas \varepsilon(u_h^a)  =   2\mu \varepsilon(u_h^a)  +\lambda \div u_h^a I ,
 \end{equation} 
 grows without limit unless $\div u_h^a $ will vanish identically. For two space dimensions 
 is well known \cite{MR1174468,MR813691} that in order to have a convergence, uniformly valid in the limit, it is required to use piecewise
 polynomials of degree four or higher. In our knowledge no result of this type is known for the three dimensional problem.

      In this section we will therefore derive the following  a posteriori estimate uniformly valid with respect to the second Lam\'e parameter.
  \begin{theorem} \label{th::estimatorinc}
   It holds 
  \begin{equation}
  \begin{aligned}
   \mu ^{-1/2} &\Vert \bfs-\bfs_h \Vert_0+ \mu^{1/2}\Vert \varepsilon(u-\uoh)\Vert_0   
   \\
   &\lesssim
 \mu^{1/2}  \Vert \comp \bfs_h-\varepsilon(\uoh )  \Vert_0  +osc(f)+osc(g).
  \end{aligned}
  \end{equation}
  \end{theorem}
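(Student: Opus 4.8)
The plan is to replace the Prager--Synge identity used for Theorem~\ref{th::estimator} by the $\mu$-uniform inf--sup stability of Theorem~\ref{incompstab}, applied directly to the error pair. Since $\uoh=\Ioh u_h^*\in V_h^a\subset\VV$ and $\bfs-\bfs_h\in\HH$, Theorem~\ref{incompstab} supplies a pair $(\boeta,z)\in\HH\times\VV$ normalized by $\mu^{-1}\|\boeta\|_0^2+\mu\|\varepsilon(z)\|_0^2=1$ for which
\[
\mu^{-1/2}\|\bfs-\bfs_h\|_0+\mu^{1/2}\|\varepsilon(u-\uoh)\|_0 \ \lesssim\ \B(\bfs-\bfs_h,\,u-\uoh;\,\boeta,z),
\]
so everything reduces to bounding the right-hand side. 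The normalization gives $\|\boeta\|_0\le\mu^{1/2}$ and $\|\varepsilon(z)\|_0\le\mu^{-1/2}$, which will produce exactly the weights appearing in the statement.

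First I would expand $\B$ and use the \emph{exact} constitutive law $\comp\bfs=\varepsilon(u)$. Since
\[
\B(\bfs-\bfs_h,u-\uoh;\boeta,z)=\big(\comp(\bfs-\bfs_h),\boeta\big)-\big(\varepsilon(u-\uoh),\boeta\big)-\big(\varepsilon(z),\bfs-\bfs_h\big),
\]
the first two terms collapse to $-(\comp\bfs_h-\varepsilon(\uoh),\boeta)$, i.e. minus the constitutive residual tested against $\boeta$; Cauchy--Schwarz and $\|\boeta\|_0\le\mu^{1/2}$ bound it by $\mu^{1/2}\|\comp\bfs_h-\varepsilon(\uoh)\|_0$, the leading term of the claimed estimate.

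For the last term I would integrate by parts: as $\bfs-\bfs_h\in\H$ is symmetric and $z\in\VV$ vanishes on $\Gamma_D$,
\[
-(\varepsilon(z),\bfs-\bfs_h)=\big(z,\div(\bfs-\bfs_h)\big)-\langle z,(\bfs-\bfs_h)n\rangle_{\Gamma_N}.
\]
Into this I insert the static equilibrium of the exact solution, $\div\bfs=-f$ and $\bfs n|_{\Gamma_N}=g$, together with the discrete equilibrium $\div\bfs_h=-P_h f$ (valid for all four families thanks to \eqref{projpro}, the Johnson--Mercier case through the divergence/projection structure recalled after it) and $\bfs_h n|_{\Gamma_N}=Q_h g$. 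The term then becomes $-(f-P_h f,z)-\langle g-Q_h g,z\rangle_{\Gamma_N}$, and these two data terms are estimated exactly as in \eqref{ss}--\eqref{ww}: element-wise $L^2$-orthogonality of $P_h$, a Poincar\'e estimate on each element, the trace inequality, and Korn's inequality bound them by $\big(osc(f)+osc(g)\big)\,\|\varepsilon(z)\|_0$, and then $\|\varepsilon(z)\|_0\le\mu^{-1/2}$. Collecting the three contributions and using the normalization of $(\boeta,z)$ finishes the proof.

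The only real difficulty is the $\mu$-bookkeeping: one has to carry the constitutive residual with the weight $\mu^{1/2}$ dictated by the statement (this is precisely what $\|\boeta\|_0\le\mu^{1/2}$ delivers) and check that the data contributions end up weighted as intended in \eqref{oscf}--\eqref{oscg}. Conceptually the argument uses nothing about $\uoh$ beyond $\uoh\in\VV$ and the exact equations; the specific two-step postprocessed choice is only needed so that the computable quantity $\|\comp\bfs_h-\varepsilon(\uoh)\|_0$ is actually small, and the matching lower bound (efficiency) is immediate from $\comp\bfs_h-\varepsilon(\uoh)=-\comp(\bfs-\bfs_h)+\varepsilon(u-\uoh)$ together with $\|\comp\bft\|_0\le\mu^{-1}\|\bft\|_0$.
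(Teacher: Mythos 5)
Your proposal is correct and follows essentially the same route as the paper: apply the $\mu$-uniform inf--sup stability of Theorem~\ref{incompstab} to the error pair, collapse the first two terms of $\B$ via $\comp\bfs=\varepsilon(u)$ into the constitutive residual $\comp\bfs_h-\varepsilon(\uoh)$, and convert the remaining term by integration by parts and the discrete equilibrium $\div\bfs_h=-P_hf$, $\bfs_h n|_{\Gamma_N}=Q_hg$ into the data oscillations handled exactly as in \eqref{ss}--\eqref{ww}. The closing remark on efficiency is a small bonus not present in the paper's proof.
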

\begin{proof}
 
  By Theorem \ref{incompstab} there exists  $(\bft,v) \in \HH \times \VV $, with   \hfill \break $\mu^{-1/2}\Vert \bft \Vert_0+ \mu^{1/2} \Vert \varepsilon(v) \Vert_0  =1$, such that
  \begin{equation}
  \begin{aligned}
 (&\Vert \bfs-\bfs_h \Vert_0 +\Vert \varepsilon(u-\uoh)\Vert_0 \lesssim
  \B( \bfs-\bfs_h ,  u-\uoh ; \bft, v)
  \\&
  =(\comp (\bfs-\bfs_h),\bft ) - ( \varepsilon(u-\uoh),\bft) - (\varepsilon(v) , \bfs-\bfs_h).
  \end{aligned} 
\end{equation}
Since $ \comp \bfs -\varepsilon(u)=0 $ 
 we have
  \begin{equation}
    (\comp (\bfs-\bfs_h),\bft ) - ( \varepsilon(u-u_h^a),\bft)=   (\comp \bfs_h-\varepsilon(u_h^a),\bft )  
 \leq \Vert  \comp \bfs_h -\varepsilon(u_h^a)\Vert_0\Vert \bft\Vert_0.
  \end{equation}
  Since $\div \bfs_h = P_h f$ and $\sigma_h n=Q_hg$ we get 
  \begin{equation}
  \begin{aligned}
  - (&\varepsilon(v) , \bfs-\bfs_h) = (\div(\bfs-\bfs_h), v  )-\langle  (\bfs-\bfs_h)n, v \rangle_{\Gamma_N}
  \\& = (P_h f -f , v  )-\langle  g-Q_h g, v \rangle_{\Gamma_N}  ,
  \end{aligned}
  \end{equation}
  and  \eqref{ss}, \eqref{ww} give
  \begin{equation}
   - (\varepsilon(v) , \bfs-\bfs_h) \lesssim osc(f)+osc(g).
   \end{equation}
   The assertion follows by collecting the above estimates.
   \end{proof}


\section{Numerical examples}
In this section we validate our theoretical findings with various
numerical examples. All numerical examples were implemented in the
finite element library Netgen/NGSolve, see \cite{netgen}.
For simplicity we only consider the two dimensional case and we use
the following two methods:
\begin{itemize}
  \item The Johnson--Mercier method (JM) from \cite{JM} considers
  linear displacements   and linear stresses. 
    \item The Arnold--Douglas--Gupta method (ADG) from \cite{ADG} where
  we use the  choice of linear displacements and quadratic
  stresses.
\end{itemize}
For both methods we hence have
\begin{equation}
 \Vh= \{\, v\in \Vtwo\, \vert \ v\vert _K\in [P_{1}(K)]^\dim\ \forall K\in \Ch\,\}.
 \end{equation}
As mentioned in Section~\ref{sec::fem} we need to specify the local
stress space $S(K)$ which then defines the global stress space by
\eqref{eq::globalstressspace}. Both, the JM and ADG method use a
similar construction.
Each triangle  $K \in \Ch$ is divided into three sub triangles
$K_i$ with $i = 1,2,3$, by connecting the barycenter with the three
vertices.   $S(K)$ is given
by 
\begin{align*}
  S(K) = \{ \tau \in H(\div, K),  \tau\vert_{K_i}\in [P_{k}(K_i)]^{2 \times 2}, i = 1,2,3. \},
\end{align*} 
with $k=1$ for JM and $k=2$ for ADG. For the postprocessing 
 \begin{equation}
 \Vh^*= \{\, v\in \Vtwo\, \vert \ v\vert _K\in [P_{k+1}(K)]^\dim\ \forall K\in \Ch\,\}
 \end{equation}
is used. Our first example contains a curved boundary and
for that  we use curved elements in order to retrieve the convergence
rates of the analysis. To illustrate this in more details an example
is given in Figure~\ref{fig::curve}. Here we consider an element $K
\in \mathcal{C}_h$ with the vertices $V_0, V_1$ and $V_2$ (the
triangle filled with gray color). Now let  $\Psi_K \in P^{k+2}(K)$
with $ \Psi_K(K) = \tilde K$ be a polynomial mapping from $K$ to the
curved triangle $\tilde K$ (filled with orange color), where we have
chosen the order $k + 2$ as suggested in
\cite{doi:10.1137/15M1045442}. Then, in order to guarantee normal
continuity, see \eqref{eq::globalstressspace}, the stress finite
elements are mapped by a Piola transformation, see \cite{MR3097958},
which includes the mapping $\Psi_K$. Thus if $\widehat \tau$ is a
basis function on a given reference element $\widehat K$ and
$\Phi_{K}$ denotes the linear mapping from $\widehat K$ to $K$, then
the mapped basis function on $\tilde K$ is given by 
\begin{align*}
  \tau = \frac{1}{\operatorname{det}(D( \Psi_K \circ \Phi_{K}))} D( \Psi_K \circ \Phi_{K}) \widehat \tau,
\end{align*}
where $D(\cdot)$ denotes the Jacobian. For more details we refer to
\cite{10.2307/2157925,doi:10.1137/15M1045442}. Note that the mapping
$\Psi_K \circ \Phi_{K}$ is applied for all sub triangles as
illustrated in Figure~\ref{fig::curve}.
 
 
 \begin{figure}[h]
  \centering
  \includegraphics[]{./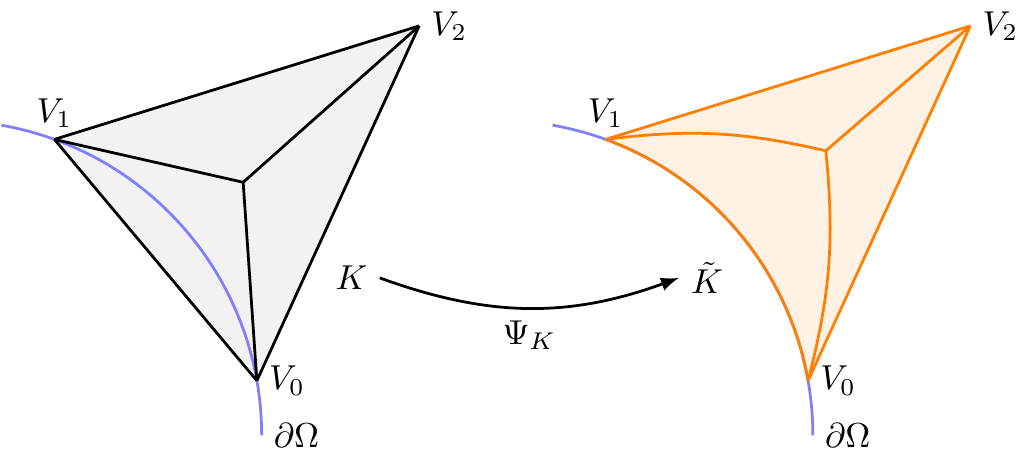}
  \caption{Barycentric refined element at the boundary of the domain
  (left) and corresponding curved element (right). }
  \label{fig::curve}
\end{figure}

We have chosen two classical examples with known exact solutions \cite{szabo1991finite}. In them the material parameters used are 
 the Young modulus $E$ and the Poisson ratio $\nu$. They are related to the 
Lam\'e parameters  by 
\begin{align} \label{eq::lamecoefs}
  \lambda =  \frac{E \nu}{(1+\nu) (1 - 2\nu)}, \quad \textrm{and} \quad \mu = \frac{E}{2 ( 1+ \nu)}.
\end{align}
The incompressible limit is obtained for $\nu\to \frac{1}{2}$. 
The exact solutions are given in polar coordinates  $r$ and $\theta$. 
In the expressions for the exact solutions the parameter value is $\kappa = 3 - 4\nu$.


\subsection{Circular hole in an infinite plate} \label{ex::hole}

The problem is that of the unstressed circular hole with radius $a$ in an infinite plate subject to the unidirectional tension $\sigma_\infty$ as
discussed in Section 19.3.1 in \cite{szabo1991finite}. The exact displacement is 
\begin{align*}
  u_x &= \frac{\sigma_\infty a}{8\mu} \left(\frac{r}{a} (\kappa + 1)\cos(\theta) + 2 \frac{a}{r} ((1+\kappa)\cos(\theta) + \cos(3 \theta)) - 2 \frac{a^3}{r^3} \cos(3 \theta) \right), \\
  u_y &= \frac{\sigma_\infty  a}{8\mu} \left(\frac{r}{a} (\kappa - 3) \sin(\theta) + 2 \frac{a}{r} ((1-\kappa) \sin(\theta) + \sin(3 \theta)) - 2 \frac{a^3}{r^3} \sin(3 \theta) \right),
\end{align*}
and the exact stress components are given by
\begin{equation}\label{holesol}
\begin{aligned} 
  \sigma_{xx} &= \sigma_\infty \left(1 - \frac{a^2}{r^2} (\frac{3}{2} \cos(2 \theta) + \cos(4 \theta)) + \frac{3}{2} \frac{a^4}{r^4} \cos(4 \theta)\right),\\
  \sigma_{yy} &= \sigma_\infty  \left( - \frac{a^2}{r^2}  (\frac{1}{2} \cos(2 \theta) - \cos(4 \theta)) - \frac{3}{2}  \frac{a^4}{r^4}  \cos(4 \theta)\right),\\
  \sigma_{xy} &= \sigma_\infty \left(- \frac{a^2}{r^2}  (\frac{1}{2} \sin(2 \theta) + \sin(4 \theta)) + \frac{3}{2}  \frac{a^4}{r^4}  \sin(4 \theta)\right).
\end{aligned}
\end{equation}

The computations we do for the domain 
 $\Omega = (-b,b) \times
(-w,w) \setminus \Omega_\circ$ with the hole given by $\Omega_\circ =
\{ (x,y) \in \mathbb{R}^2: | (x,y) | \le a\}$, see   the left
picture of Figure~\ref{fig::holeandlshape}. 

\begin{figure}
  \centering
    \includegraphics[width=0.9\textwidth]{./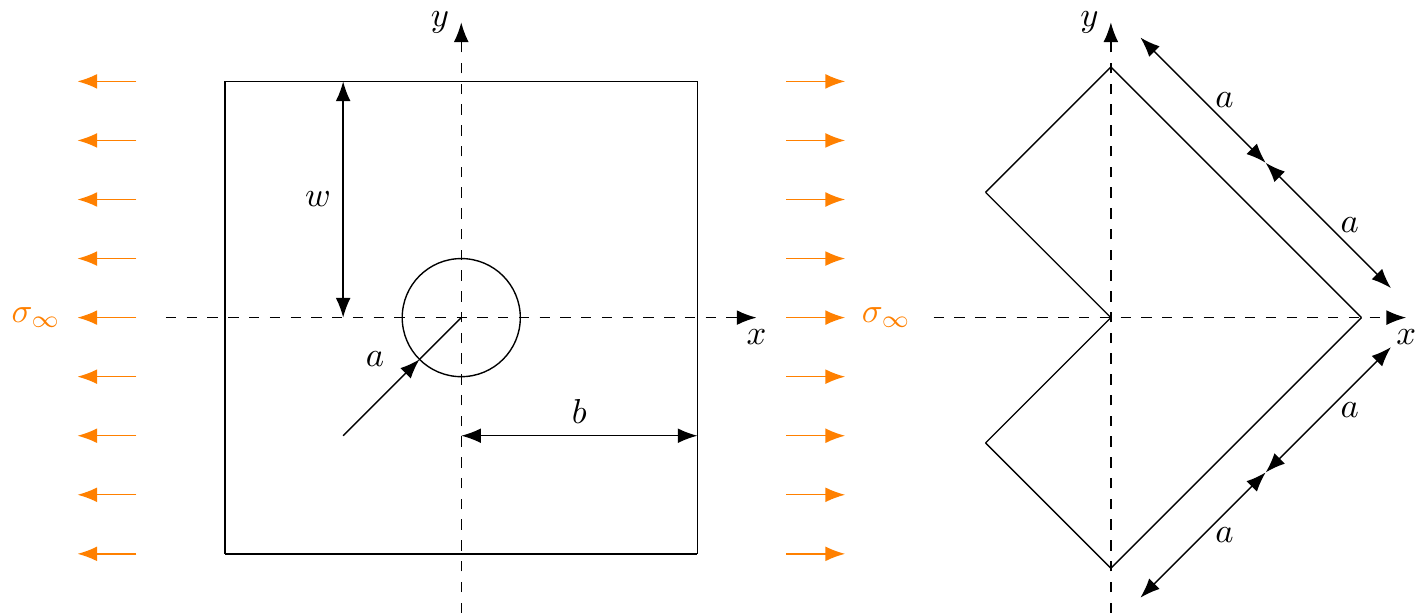}
    \caption{Computational domain for example \ref{ex::hole} on the
    left and for example \ref{ex::lshape} on the
    right.}\label{fig::holeandlshape}
  \end{figure}

We choose
$a = 1$ and $b = w = 4a$, and use the material parameters $E = 1$ and 
$\nu = 0.3,0.4,0.49,0.49999$. 
On the outer boundary we assign the traction obtained  from \eqref{holesol} with  $\sigma_\infty = 1$.
The displacement is fixed to be orthogonal to the rigid body motions.

\begin{figure}
  \centering
  \includegraphics[]{./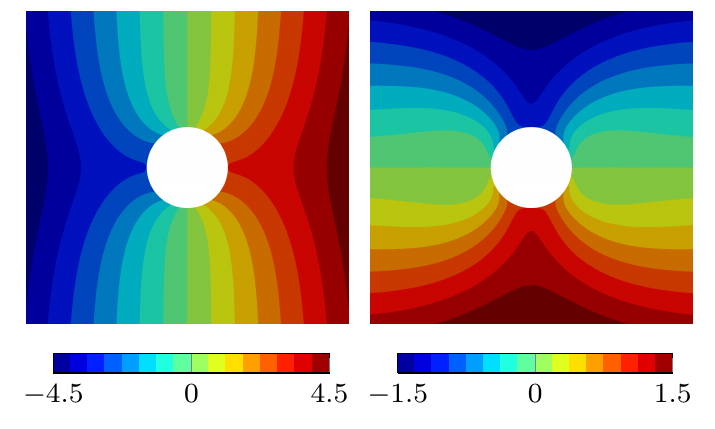}
  \caption{Displacement components $u_x$ and $u_y$ of the solution of example \ref{ex::hole}} \label{fig::solholedispl}
  \end{figure}

\begin{figure}
  \centering
    \includegraphics[]{./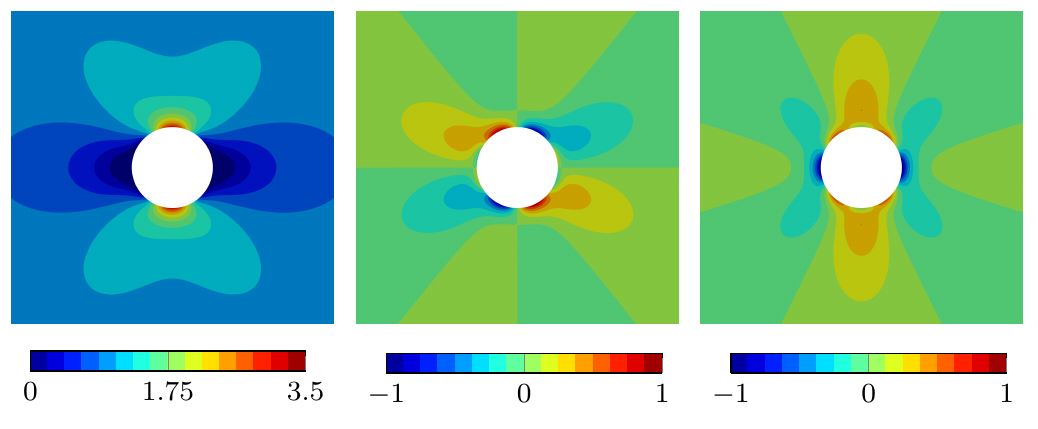}
    \caption{Stress components $\sigma_{xx}$, $\sigma_{xy}$ and
    $\sigma_{yy}$ of the solution of example \ref{ex::hole}}
    \label{fig::solholestress}
  \end{figure}

To validate the theoretical findings   we introduce the  relative
$L^2$-error of the stress and strain
 \begin{equation}
  e_0^{\sigma}  = \frac{ \| \sigma - \sigma_h \|_0 }{\| \sigma\|_0}, 
  \quad \quad
   e_0^{u}  = \frac{\| \eps(u) - \eps(u_h^a) \|_0 }{{\| \eps(u)\|_0}}, 
\end{equation}
for which the a priori estimates of Theorems \ref{Theo3} and \ref{Theo4}, and 
the a posteriori estimate of Theorem
\ref{th::estimatorinc} hold. 

The second set of error quantities are the relative errors in strain energy for the stress directly obtained from the method, and computed from the displacement $u_h^a$, through
\eqref{compstress} \begin{equation}
 e_\mathcal{C}^\sigma  = \frac{\| \sigma - \sigma_h \|_\mathcal{C} }{\| \sigma\|_\mathcal{C}},\quad  \quad
 e_\mathcal{C}^{\mathcal{A}\eps(u)}  = \frac{\| \sigma - \mathcal{A}\eps(u_h^a) \|_\mathcal{C} }{\| \sigma\|_\mathcal{C}}.
\end{equation}
The last set is those given by the hypercircle estimate
\begin{equation}
  {e}^{\operatorname{mean}}_\mathcal{C}   = \frac{\| \sigma -
  \frac{1}{2} (\sigma_h + \mathcal{A}\eps(u_h^a)) \|_\mathcal{C} }{\|
  \sigma\|_\mathcal{C}},\quad  \quad  c_{\operatorname{eff}} =
  \frac{ \| \sigma - \frac{1}{2} (\sigma_h + \mathcal{A}\eps(u_h^a))\Vert_\mathcal{C} }{\frac{1}{2}\|
  \sigma_h -  \mathcal{A}\eps(u_h^a) \|_\mathcal{C}},
\end{equation}
where $c_{\operatorname{eff}}$ measures the efficiency of estimate
\eqref{miracle}. The oscillation is a higher order term, and  
  we expect that $c_{\operatorname{eff}} \rightarrow 1$ when
$h \rightarrow 0$, which means that the error estimator is asymptotically exact. Further, we introduce the symbol $N $ for the number of elements in 
$\mathcal{C}_h$. For uniform refinements we have $h \sim N^{-1/2}$.

In Table~\ref{tab::holeexJM} and Table~\ref{tab::holeexADG} the errors
and the  order of convergence ({{oc}}) for the JM and the
ADG method are given for varying Poisson ratios for a uniform mesh
refinement. As predicted by the analysis all errors converge with
optimal order $\mathcal{O}(N^{-1} )$ and  $\mathcal{O}(N^{-3/2} )$, for JM and ADG, respectively, and the constant
$c_{\operatorname{eff}}$ converges to $1$. Further, the quantities
$e_0^{\sigma}$, $e_0^{u}$ and $e_\mathcal{C}^\sigma$ stay constant for
all values~$\nu$. Since in the incompressible limit $\nu \rightarrow
\frac{1}{2}$,  the Lam\'e parameter $\lambda \rightarrow \infty$, see  
\eqref{eq::lamecoefs}, we expect that the errors, when computed from
\eqref{compstress}, should deteriorate. Indeed, although converging
with optimal order, the errors ${e}^{\operatorname{mean}}_\mathcal{C}$
and $e_\mathcal{C}^{\mathcal{A}\eps(u)}$  start to grow significantly
in the incompressible limit. To give more insight on this behavior we
have plotted in Figure~\ref{fig::blowup} the error
$e_\mathcal{C}^{\mathcal{A}\eps(u)}$ for both methods with respect to
the Poisson ratio. As we can see the blow up occurs continuously and
can be made arbitrarily big if one approaches the incompressible
limit.

We finally note that in all cases the stress $\bfs_h$  is much more
 accurate than that computed by $ \elas \varepsilon(u_h^a) $.
 Furthermore,  the latter dominates in
 ${e}^{\operatorname{mean}}_\mathcal{C} $, thus it holds  
 ${e}^{\operatorname{mean}}_\mathcal{C} \sim\frac{1}{2}  e_\mathcal{C}^{\mathcal{A}\eps(u)}$, for all values of the Poisson ratio.



\begin{table}[]
  \begin{center}
  \footnotesize
        \begin{tabular}{
          r|
          c !{\!(\!\!\!\!}c!{\!\!\!\!)}
          c !{\!(\!\!\!\!}c!{\!\!\!\!)}
          c !{\!(\!\!\!\!}c!{\!\!\!\!)}
          c !{\!(\!\!\!\!}c!{\!\!\!\!)}
          c !{\!(\!\!\!\!}c!{\!\!\!\!)}
          c}
        \toprule
     $N$ & $ e_0^{\sigma} $ & \footnotesize oc & $ e_0^{u} $ &
     \footnotesize oc & $ e_\mathcal{C}^\sigma  $ & \footnotesize oc
     & $ e_\mathcal{C}^{\mathcal{A}\eps(u)}$ & \footnotesize oc & $
     {e}^{\operatorname{mean}}_\mathcal{C}$ & \footnotesize oc &
     $c_{\operatorname{eff}}$ \\ 
    \midrule
    \multicolumn{12}{c}{$\nu = 0.3$} \\
    202&\num{0.0428715966171844}&--&\num{0.10933853546541408}&--&\num{0.04281138063216149}&--&\num{0.1198254448780478}&--&\num{0.06362644980332831}&--&\numnum{0.94176895156639}\\
808&\num{0.018033372592435527}&\numeoc{1.2493529126572407}&\num{0.04531833472764948}&\numeoc{1.270635202786904}&\num{0.018005540554835213}&\numeoc{1.249553449857585}&\num{0.05001435219233172}&\numeoc{1.260520238975607}&\num{0.026587559894286853}&\numeoc{1.2588752468871152}&\numnum{0.9412114044952777}\\
3232&\num{0.00447432548865954}&\numeoc{2.0109271186975257}&\num{0.011833482457088705}&\numeoc{1.9372201440715973}&\num{0.00446669335705982}&\numeoc{2.0111617914633584}&\num{0.013176743868048943}&\numeoc{1.924348245113595}&\num{0.006958719377387909}&\numeoc{1.9338576436962778}&\numnum{0.9473526283591069}\\
12928&\num{0.0010836709878598643}&\numeoc{2.045743402168842}&\num{0.003254160158801607}&\numeoc{1.8625175421745919}&\num{0.001081726232730845}&\numeoc{2.0458717928194186}&\num{0.003626942704962359}&\numeoc{1.8611680468260816}&\num{0.0018928583028228238}&\numeoc{1.8782554127601958}&\numnum{0.9585149960629176}\\
\\
    \midrule
    \multicolumn{12}{c}{$\nu = 0.4$} \\
    202&\num{0.042872271648751556}&--&\num{0.1024754621233989}&--&\num{0.04277875982911489}&--&\num{0.14250713937976464}&--&\num{0.07439712675943427}&--&\numnum{0.9578078127206504}\\
808&\num{0.018033450667146195}&\numeoc{1.249369382254343}&\num{0.042844818820757004}&\numeoc{1.2580858395735095}&\num{0.01799022928028807}&\numeoc{1.249681085447182}&\num{0.06014024506719267}&\numeoc{1.2446315458865653}&\num{0.03139392290991741}&\numeoc{1.2447615891460477}&\numnum{0.9582741535902154}\\
3232&\num{0.004474325497171378}&\numeoc{2.0109333620255603}&\num{0.01126705408764215}&\numeoc{1.9270103971461574}&\num{0.0044624727420876946}&\numeoc{2.011298311504084}&\num{0.01610428391114887}&\numeoc{1.90088623515405}&\num{0.008357114977124548}&\numeoc{1.9094084262264213}&\numnum{0.9638657829455269}\\
12928&\num{0.0010836709872449131}&\numeoc{2.0457434057320736}&\num{0.0031224562234713964}&\numeoc{1.851357103488058}&\num{0.0010806506944788402}&\numeoc{2.045943089790939}&\num{0.004478028909150907}&\numeoc{1.8465087645998184}&\num{0.0023036112492355165}&\numeoc{1.8591077123020503}&\numnum{0.9722310441102362}\\
\\
    \midrule
    \multicolumn{12}{c}{$\nu = 0.49$} \\
    202&\num{0.04287305941681353}&--&\num{0.09351404260925669}&--&\num{0.042738571987939095}&--&\num{0.3780823242422126}&--&\num{0.19024531089087424}&--&\numnum{0.9936724949866571}\\
808&\num{0.01803356321023105}&\numeoc{1.2493868876608412}&\num{0.03957352844353101}&\numeoc{1.2406473204911632}&\num{0.017971402778475608}&\numeoc{1.249835679972707}&\num{0.16176162533463664}&\numeoc{1.2248310051330504}&\num{0.08138095459074997}&\numeoc{1.225097786756948}&\numnum{0.9939160069843623}\\
3232&\num{0.004474325510196897}&\numeoc{2.0109423613628223}&\num{0.010513160504862775}&\numeoc{1.9123392614265213}&\num{0.004457278086909669}&\numeoc{2.0114681467102953}&\num{0.044458598939629355}&\numeoc{1.8633350088550655}&\num{0.022341227773450486}&\numeoc{1.8649827311504674}&\numnum{0.9950336584191496}\\
12928&\num{0.001083670986448385}&\numeoc{2.045743410992424}&\num{0.0029465985126433487}&\numeoc{1.8350740375285823}&\num{0.0010793268684184106}&\numeoc{2.0460311290977167}&\num{0.012564366691074882}&\numeoc{1.8231245304711392}&\num{0.006305417433904284}&\numeoc{1.8250446833007985}&\numnum{0.9963459122717475}\\
\\
    \midrule
    \multicolumn{12}{c}{$\nu = 0.49999$} \\
    202&\num{0.04287315783966082}&--&\num{0.09238403614987771}&--&\num{0.04273322544188468}&--&\num{11.711628690648638}&--&\num{5.855853329120688}&--&\numnum{0.9999933437303044}\\
808&\num{0.018033579228177846}&\numeoc{1.2493889181798972}&\num{0.039159071313996624}&\numeoc{1.2382970264335413}&\num{0.017968902087827816}&\numeoc{1.2498559517684036}&\num{5.019426143745607}&\numeoc{1.2223473801506135}&\num{2.5097292341783124}&\numeoc{1.2223476937592959}&\numnum{0.9999936245013387}\\
3232&\num{0.00447432551211543}&\numeoc{2.010943642188256}&\num{0.010417929465367768}&\numeoc{1.9102779763178426}&\num{0.004456587754570699}&\numeoc{2.011490842526954}&\num{1.3849012603659514}&\numeoc{1.8577393147452697}&\num{0.6924542308790048}&\numeoc{1.8577411036333493}&\numnum{0.999994844575019}\\
12928&\num{0.0010836709863409524}&\numeoc{2.0457434117540583}&\num{0.0029244195706672684}&\numeoc{1.8328463576839544}&\num{0.0010791509353478292}&\numeoc{2.0460428529006447}&\num{0.39229118841003435}&\numeoc{1.8197862844933952}&\num{0.1961463393939172}&\numeoc{1.8197883053678099}&\numnum{0.9999962317803571}\\
\\
  \bottomrule
  \end{tabular}
  \vspace{3mm}
  \caption{Errors and  order of convergence for the hole in
  an infinite plate example \ref{ex::hole} for the JM method and
  varying Poisson ratios~$\nu$.} \label{tab::holeexJM}
  \end{center}
  \end{table}

  \begin{table}[]
    \begin{center}
    \footnotesize
          \begin{tabular}{
            r|
            c !{\!(\!\!\!\!}c!{\!\!\!\!)}
            c !{\!(\!\!\!\!}c!{\!\!\!\!)}
            c !{\!(\!\!\!\!}c!{\!\!\!\!)}
            c !{\!(\!\!\!\!}c!{\!\!\!\!)}
            c !{\!(\!\!\!\!}c!{\!\!\!\!)}
            c}
          \toprule
       $N$ & 
       $ e_0^{\sigma} $ & \footnotesize oc &
       $ e_0^{u} $ & \footnotesize oc &
       $ e_\mathcal{C}^\sigma  $ & \footnotesize oc &
       $ e_\mathcal{C}^{\mathcal{A}\eps(u)}$ & \footnotesize oc &
       $ {e}^{\operatorname{mean}}_\mathcal{C}$ & \footnotesize oc & 
       $c_{\operatorname{rel}}$ \\ 
      \midrule
      \multicolumn{12}{c}{$\nu = 0.3$} \\
      202&\num{0.009935450421657577}&--&\num{0.024936208216471786}&--&\num{0.009920936170832817}&--&\num{0.027650543651819934}&--&\num{0.014692011673652784}&--&\numnum{0.9414895416653294}\\
808&\num{0.002704842959480119}&\numeoc{1.8770405382191417}&\num{0.011354736722550546}&\numeoc{1.134947851303161}&\num{0.0027002147353413623}&\numeoc{1.8774021217208319}&\num{0.01313189091867395}&\numeoc{1.0742331751346437}&\num{0.00670368261590981}&\numeoc{1.1320061957377638}&\numnum{0.9795609961450003}\\
3232&\num{0.0003361353374963505}&\numeoc{3.0084307105353045}&\num{0.0014344024334322484}&\numeoc{2.9847725093423785}&\num{0.00033553791349952544}&\numeoc{3.0085264471051736}&\num{0.001661827400109419}&\numeoc{2.9822322166598005}&\num{0.0008477086467181453}&\numeoc{2.9833134387168574}&\numnum{0.9802505133313109}\\
12928&\num{4.014037456974219e-05}&\numeoc{3.06591623500267}&\num{0.00016117631364165563}&\numeoc{3.153738193719868}&\num{4.0067213508714365e-05}&\numeoc{3.0659817048180447}&\num{0.00018535673883808134}&\numeoc{3.164394077504409}&\num{9.482092574028021e-05}&\numeoc{3.160291119475505}&\numnum{0.9774457556823412}\\
\\
      \midrule
      \multicolumn{12}{c}{$\nu = 0.4$} \\
      202&\num{0.009935449820564596}&--&\num{0.023737759158593524}&--&\num{0.009912907203510467}&--&\num{0.03348837794204451}&--&\num{0.017465861015168487}&--&\numnum{0.9590646380050155}\\
808&\num{0.002704842957521581}&\numeoc{1.877040451980981}&\num{0.01040414066006606}&\numeoc{1.1900259419622088}&\num{0.002697655218435835}&\numeoc{1.8776022529322478}&\num{0.015711365248640613}&\numeoc{1.0918519487764287}&\num{0.007970971723439794}&\numeoc{1.1317102492800828}&\numnum{0.9856186759217505}\\
3232&\num{0.00033613533753026805}&\numeoc{3.0084307093450953}&\num{0.0013166872817699615}&\numeoc{2.9821731633803346}&\num{0.0003352075142413334}&\numeoc{3.008579577336805}&\num{0.0019925879215771376}&\numeoc{2.9790932612790835}&\num{0.0010103180183389042}&\numeoc{2.979946128334281}&\numnum{0.9861673011919064}\\
12928&\num{4.014130489544717e-05}&\numeoc{3.0658827984715336}&\num{0.00016541372826889798}&\numeoc{2.9927618617843637}&\num{4.0027849143772225e-05}&\numeoc{3.065978488687041}&\num{0.0002459228765016482}&\numeoc{3.0183655333357753}&\num{0.00012458119994813305}&\numeoc{3.0196512038004384}&\numnum{0.9870239688618331}\\
\\
      \midrule
      \multicolumn{12}{c}{$\nu = 0.49$} \\
      202&\num{0.009935450764379442}&--&\num{0.021959584901966978}&--&\num{0.0099030251174504}&--&\num{0.0897714038412075}&--&\num{0.04515948911397405}&--&\numnum{0.9940034917376572}\\
808&\num{0.002704842986813413}&\numeoc{1.8770405734057993}&\num{0.009187894277108912}&\numeoc{1.2570446224204077}&\num{0.0026945049695837335}&\numeoc{1.8778490509025665}&\num{0.04161041932393137}&\numeoc{1.109311128988937}&\num{0.02084892329929099}&\numeoc{1.1150562845887373}&\numnum{0.9979165481861905}\\
3232&\num{0.00033613533818725375}&\numeoc{3.0084307221488333}&\num{0.001165017176210281}&\numeoc{2.979383030928584}&\num{0.00033480084616331683}&\numeoc{3.0086451683307534}&\num{0.0052857539606513595}&\numeoc{2.9767636494273213}&\num{0.002648183569607305}&\numeoc{2.9768978435156104}&\numnum{0.9980039077694394}\\
12928&\num{4.014023401294103e-05}&\numeoc{3.0659212897633172}&\num{0.00013646140287954458}&\numeoc{3.093786367036857}&\num{3.9976743499950515e-05}&\numeoc{3.066070313865968}&\num{0.0006115749850467092}&\numeoc{3.111507969332853}&\num{0.0003064408483560219}&\numeoc{3.111322606423099}&\numnum{0.9978728944755696}\\
\\
      \midrule
      \multicolumn{12}{c}{$\nu = 0.49999$} \\
      202&\num{0.009935451030417826}&--&\num{0.021724837886751825}&--&\num{0.009901711843999371}&--&\num{2.781554647699758}&--&\num{1.3907861852480317}&--&\numnum{0.9999936996171864}\\
808&\num{0.00270484299335733}&\numeoc{1.8770406085460214}&\num{0.009034812938879824}&\numeoc{1.265778775264409}&\num{0.0026940863206158824}&\numeoc{1.8778818885177833}&\num{1.2876487636913612}&\numeoc{1.1111523314103815}&\num{0.6438257955454125}&\numeoc{1.1111583557310094}&\numnum{0.9999978182738153}\\
3232&\num{0.0003361353383258459}&\numeoc{3.008430725044355}&\num{0.0011458733902425554}&\numeoc{2.979047084398344}&\num{0.00033474680133874186}&\numeoc{3.0086539015026923}&\num{0.1635883402748551}&\numeoc{2.976597289043005}&\num{0.08179434172081523}&\numeoc{2.976597430482756}&\numnum{0.999997910511553}\\
12928&\num{4.014024034205332e-05}&\numeoc{3.065921062881201}&\num{0.0001653143261764033}&\numeoc{2.7931639873127803}&\num{3.997012786389718e-05}&\numeoc{3.066076176953594}&\num{0.023689984670187007}&\numeoc{2.787720754031453}&\num{0.011845009215522765}&\numeoc{2.787721724436777}&\numnum{0.9999985784163062}\\
\\
    \bottomrule
    \end{tabular}
    \vspace{3mm}
    \caption{Errors and  order of convergence for the hole in
    an infinite plate Example \ref{ex::hole} for the ADG method and
    varying Poisson ratios~$\nu$.} \label{tab::holeexADG}
    \end{center}
    \end{table}

    \begin{figure}
      \centering
        \includegraphics[width = 0.7\textwidth]{./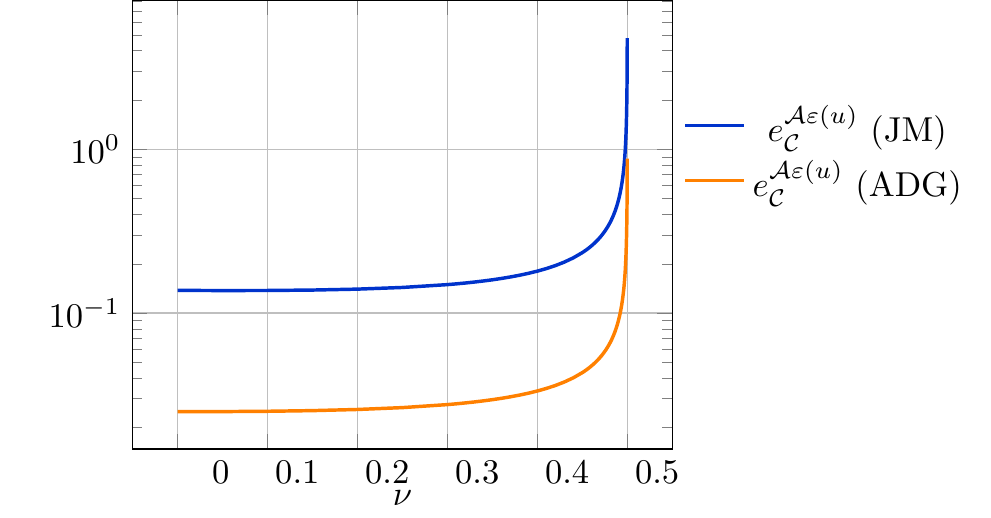}
    \caption{The error $ e_\mathcal{C}^{\mathcal{A}\eps(u)}$ of the JM
    and ADG method for the hole in an infinite plate example
    \ref{ex::hole} for varying Poisson ratios on a fixed mesh with
    $N= 606$}\label{fig::blowup}
    \end{figure}

\subsection{L-shape example} \label{ex::lshape}

We employ an adaptive mesh refinement for the L-shape example from
Section 10.3.2 of \cite{szabo1991finite}. To this end let $\Omega$ be
given by 
\begin{align*}
\Omega = \{ (x,y): |x| + |y| \le {2}^{1/2} a \} \setminus  \{ (x,y): |x- 2^{-1/2} a| + |y| \le 2^{-1/2} a \},
\end{align*}
as illustrated in the right picture of
Figure~\ref{fig::holeandlshape}. For our test case we choose $a = 1$
and use $E = 1$ and different values of $\nu$. Further, we define the
constants \hfill \break $\alpha = 0.544483737 $ and $Q = 0.543075579$. The exact
displacement field, up to rigid-body displacements and rotations, is
given by 
\begin{align*}
  u_x &= \frac{1}{2\mu} r^\alpha  ( (\kappa - Q(\alpha +1))  \cos(\alpha \theta) - \alpha \cos((\alpha - 2)\theta) ), \\
  u_y &= \frac{1}{2\mu} r^\alpha  ( (\kappa + Q(\alpha +1))  \sin(\alpha \theta) + \alpha \sin((\alpha - 2)\theta) ),
\end{align*}
and the stress components are
\begin{align*}
  \sigma_{xx} &= \alpha  r^{\alpha-1}  ( (2 - Q(\alpha + 1))  \cos((\alpha-1)  \theta) - (\alpha-1) \cos((\alpha - 3)\theta) ), \\
  \sigma_{yy} &= \alpha  r^{\alpha-1}  ( (2 + Q(\alpha + 1))  \cos((\alpha-1)  \theta) + (\alpha-1) \cos((\alpha - 3)\theta) ), \\ 
  \sigma_{xy} &= \alpha  r^{\alpha-1}  ( (\alpha-1)  \sin((\alpha -3)\theta) + Q (\alpha+1)  \sin((\alpha-1)\theta)). 
\end{align*}

To solve the problem we again enforce traction boundary conditions  
on the whole boundary $\partial \Omega$.  
We consider a uniform refinement  and 
    adaptive refinements where we use the a
posteriori error estimator of Theorem~\ref{th::estimator} and for the
incompressible limit the estimator given in
Theorem~\ref{th::estimatorinc}. Now let $K \in \Ch$ be an
arbitrary element, then we define the local contributions
\begin{align*}
  \eta(K)^2 =\frac{1}{4} \| \sigma_h - \mathcal{A} \varepsilon(u_h^a)\|_{\mathcal{C},K}^2 \quad \textrm{and} \quad \eta^{\operatorname{inc}}(K)^2 = \mu^{1/2} \| \mathcal{C}\sigma_h - \varepsilon(u_h^a)\|_{0,K}^2,
\end{align*}
where $\| \cdot \|_{\mathcal{C},K}$ reads as the norm $\| \cdot
\|_{\mathcal{C}}$ restricted on the element $K$. The adaptive mesh
refinement loop is defined as usual by 
\begin{align*}
   \mathrm{SOLVE} \rightarrow \mathrm{ESTIMATE} \rightarrow \mathrm{MARK} \rightarrow \mathrm{REFINE} \rightarrow \mathrm{SOLVE} \rightarrow \ldots
\end{align*}
In the marking step we mark an element $K \in \Ch$ for
refinement if $\eta(K) \geq \frac{1}{4} \max\limits_{K \in
\Ch} \eta(K)$ or $\eta^{\operatorname{inc}}(K) \geq
\frac{1}{4} \max\limits_{K \in \Ch}
\eta^{\operatorname{inc}}(K)$. After that, the mesh refinement
algorithm refines the marked elements plus further elements to
guarantee a regular triangulation. Beside the error quantities
introduced above we further define the (relative) estimators
\begin{alignat*}{3}
  \eta &= \frac{\frac{1}{2} \| \sigma_h -  \mathcal{A}\eps(u_h^a) \|_\mathcal{C}}{\|
  \sigma\|_\mathcal{C}},\quad&& \quad &\eta^{\operatorname{inc}} &= \frac{
  \mu^{1/2} \| \mathcal{C}\sigma_h -
  \varepsilon(u_h^a)\|_0}{\mu^{-1/2}\| \sigma\|_0},
\end{alignat*}
and the scaled error 
$$e_0^{u, \operatorname{inc}} =\frac{\mu^{1/2} \|
\eps(u) - \eps(u_h^a) \|_0 }{\mu^{-1/2} \| \sigma\|_0}$$

In Figure~\ref{fig::lshapeerrors} we plot the error history of
$e_{\mathcal{C}}^{\sigma}, \ e_{\mathcal{C}}^{\mathcal{A}\eps(u)}$ and
$e_0^{\operatorname{mean}}$ for the JM and the ADG method using an
adaptive refinement based on the estimator $\eta$ for a moderate
Poisson ratio $\nu = 0.3$. From the coarsest to the finest mesh the measure of efficiency $c_{\operatorname{eff}}$ varies in in the range $0.99 -1.00$, 
and hence the error estimator $\eta$ is not plotted as it would be indistinguishable from $e_0^{\operatorname{mean}}$.
From the figure we see that all 
  errors converge with optimal order
$\mathcal{O}(N^{-(k+1)/2})$. 

To show the drastic decrease of the
errors when using an adaptive algorithm, we also include the error
  $e_{\mathcal{C}}^{\sigma}$ for a uniform refinement.
Since
the exact solution is in the Sobolev space $H^s$, with $s<1.54$, a
uniform mesh only yields a convergence rate of ${\mathcal O}
(h^{0.54})$, i.e. ${\mathcal O}
(N^{-0.27})$.

In Figure~\ref{fig::lshapeerrorsinc} we plot the same quantities but
using an incompressible setting with $\nu = 0.49999$. Although all
quantities still converge with an optimal order, we observe the same
error deterioration as in the previous example. Thus, while
$e_\mathcal{C}^\sigma$ (and also $e_0^{\sigma}$, $e_0^{u}$ which are
not plotted) is not affected by the choice of the Poisson ratio $\nu$,
the errors ${e}^{\operatorname{mean}}_\mathcal{C}$ and
$e_\mathcal{C}^{\mathcal{A}\eps(u)}$ and the estimator $\eta$ are much
bigger and should not be used in practice.

 To this end we follow the theory of
Theorem~\ref{th::estimatorinc}, i.e. we employ the estimator
$\eta^{{\operatorname{inc}}}$. 
In Figure~\ref{fig::lshapeerrorsincinc}
the corresponding relative errors and the estimator are plotted and we
observe that (up to an unknown constant $\mathcal{O}(1)$) the error
estimator gives a good prediction of the errors $e_0^\sigma$ and
$e_0^{u, \operatorname{inc}}$. Further, all errors converge with
optimal error $\mathcal{O}(N^{-(k+1)/2})$, $k=1,2$. Again we include the errors
for a uniform refinement which shows the drastic decrease when
using an adaptive algorithm.

\begin{figure}
  \centering
    \includegraphics[width = \textwidth]{./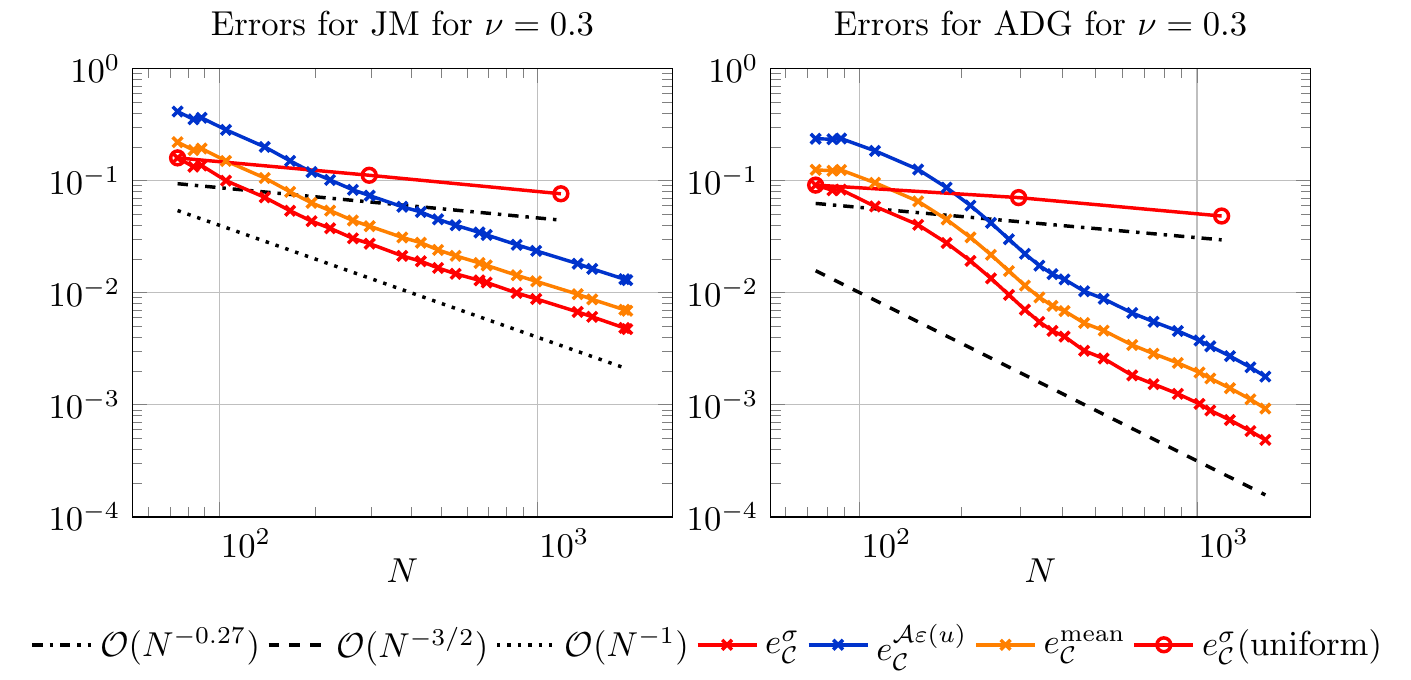}
\caption{Error of the JM and ADG for the L-shape example
\ref{ex::lshape} with an adaptive refinement using estimator $\eta$
and a constant Poisson ratio $\nu = 0.3$.} \label{fig::lshapeerrors}
\end{figure}

\begin{figure}
  \centering
    \includegraphics[width = \textwidth]{./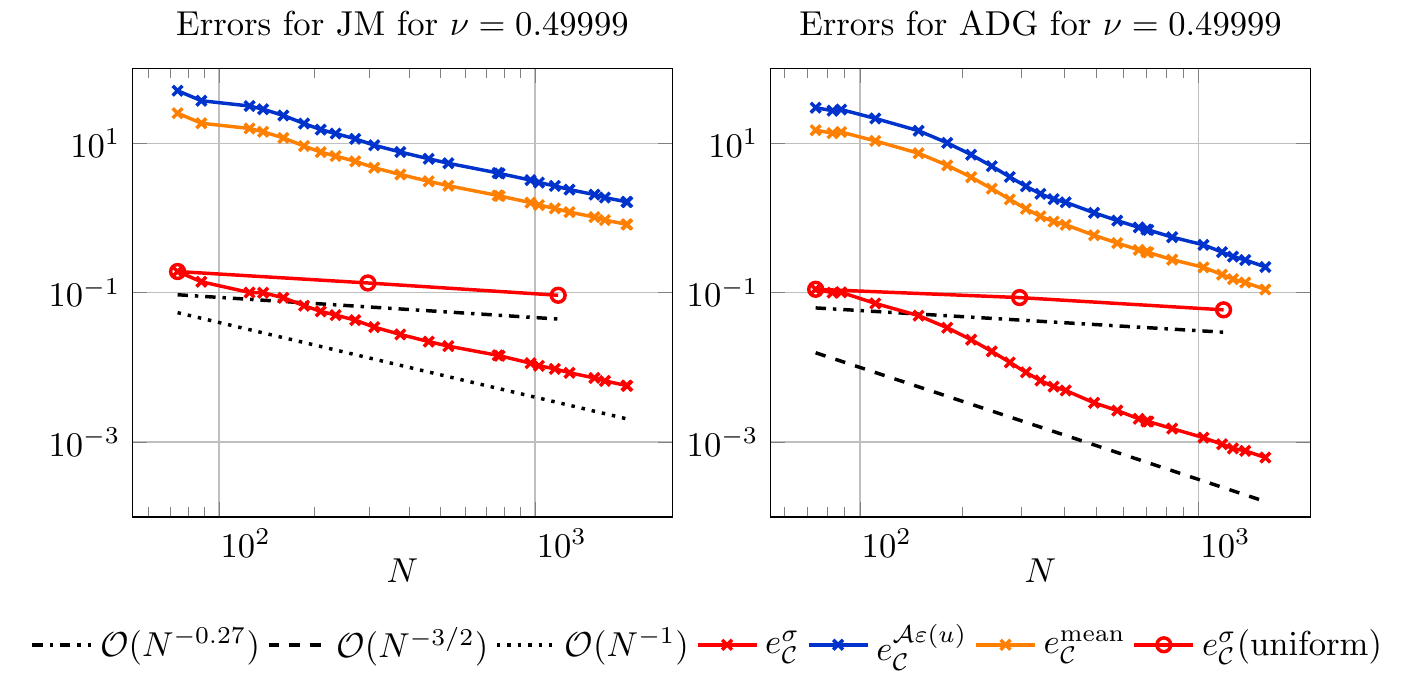}
\caption{Error of the JM and ADG for the L-shape example
\ref{ex::lshape} with an adaptive refinement using estimator $\eta$
and a constant Poisson ratio $\nu = 0.49999$.} \label{fig::lshapeerrorsinc}
\end{figure}

\begin{figure}
  \centering
    \includegraphics[width = \textwidth]{./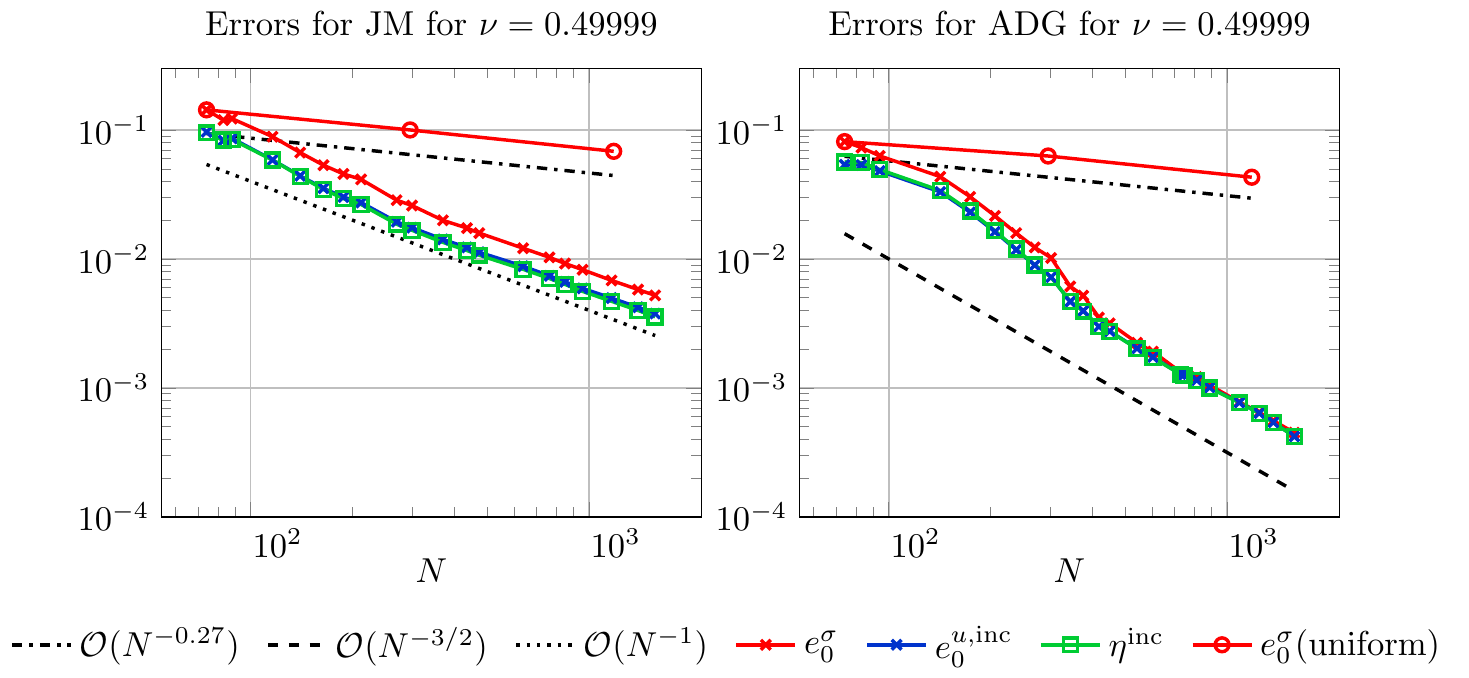}
\caption{Error of the JM and ADG for the L-shape example
\ref{ex::lshape} with an adaptive refinement using estimator
$\eta^{\operatorname{inc}}$ and a constant Poisson ratio $\nu =
0.49999$.}\label{fig::lshapeerrorsincinc}
\end{figure}


\bibliography{literature}

\begin{thebibliography}{10}

\bibitem{MR664882}
Douglas~N. Arnold.
\newblock An interior penalty finite element method with discontinuous
  elements.
\newblock {\em SIAM J. Numer. Anal.}, 19(4):742--760, 1982.

\bibitem{AAW3D}
Douglas~N. Arnold, Gerard Awanou, and Ragnar Winther.
\newblock Finite elements for symmetric tensors in three dimensions.
\newblock {\em Math. Comp.}, 77(263):1229--1251, 2008.

\bibitem{AB}
Douglas~N. Arnold and Franco Brezzi.
\newblock Mixed and nonconforming finite element methods: implementation,
  postprocessing and error estimates.
\newblock {\em RAIRO Mod\'el. Math. Anal. Num\'er.}, 19(1):7--32, 1985.

\bibitem{ADG}
Douglas~N. Arnold, Jim Douglas, Jr., and Chaitan~P. Gupta.
\newblock A family of higher order mixed finite element methods for plane
  elasticity.
\newblock {\em Numer. Math.}, 45(1):1--22, 1984.

\bibitem{MR2269741}
Douglas~N. Arnold, Richard~S. Falk, and Ragnar Winther.
\newblock Finite element exterior calculus, homological techniques, and
  applications.
\newblock {\em Acta Numer.}, 15:1--155, 2006.

\bibitem{BOP}
Ivo Babu{\v{s}}ka, John Osborn, and Juhani Pitk{\"a}ranta.
\newblock Analysis of mixed methods using mesh dependent norms.
\newblock {\em Math. Comp.}, 35(152):1039--1062, 1980.

\bibitem{MR1174468}
Ivo Babu\v{s}ka and Manil Suri.
\newblock Locking effects in the finite element approximation of elasticity
  problems.
\newblock {\em Numer. Math.}, 62(4):439--463, 1992.

\bibitem{MR431742}
Garth~A. Baker.
\newblock Finite element methods for elliptic equations using nonconforming
  elements.
\newblock {\em Math. Comp.}, 31(137):45--59, 1977.

\bibitem{10.2307/2157925}
Christine Bernardi.
\newblock Optimal finite-element interpolation on curved domains.
\newblock {\em SIAM Journal on Numerical Analysis}, 26(5):1212--1240, 1989.

\bibitem{doi:10.1137/15M1045442}
Fleurianne Bertrand and Gerhard Starke.
\newblock Parametric raviart--thomas elements for mixed methods on domains with
  curved surfaces.
\newblock {\em SIAM Journal on Numerical Analysis}, 54(6):3648--3667, 2016.

\bibitem{MR1106633}
Raymond~L. Bisplinghoff, James~W. Mar, and Theodore H.~H. Pian.
\newblock {\em Statics of deformable solids}.
\newblock Dover Publications, Inc., New York, 1990.
\newblock Corrected reprint of the 1965 original.

\bibitem{MR3097958}
Daniele Boffi, Franco Brezzi, and Michel Fortin.
\newblock {\em Mixed finite element methods and applications}, volume~44 of
  {\em Springer Series in Computational Mathematics}.
\newblock Springer, Heidelberg, 2013.

\bibitem{MR2047078}
Susanne~C. Brenner.
\newblock Korn's inequalities for piecewise {$H^1$} vector fields.
\newblock {\em Math. Comp.}, 73(247):1067--1087, 2004.

\bibitem{MR0065391}
Richard Courant and David Hilbert.
\newblock {\em Methods of mathematical physics. {V}ol. {I}}.
\newblock Interscience Publishers, Inc., New York, N.Y., 1953.

\bibitem{DiPietroErn}
Daniele~Antonio Di~Pietro and Alexandre Ern.
\newblock {\em Mathematical aspects of discontinuous {G}alerkin methods},
  volume~69 of {\em Math\'ematiques \& Applications (Berlin) [Mathematics \&
  Applications]}.
\newblock Springer, Heidelberg, 2012.

\bibitem{MR0440955}
Jim Douglas, Jr. and Todd Dupont.
\newblock Interior penalty procedures for elliptic and parabolic {G}alerkin
  methods.
\newblock In {\em Computing methods in applied sciences ({S}econd {I}nternat.
  {S}ympos., {V}ersailles, 1975)}, pages 207--216. Lecture Notes in Phys., Vol.
  58. 1976.

\bibitem{FdV}
Baudouin~M. Fraejis~de Veubeke.
\newblock Displacement and equilibrium models in the finite element method.
\newblock In O.C Zienkiewics and G.S. Holister, editors, {\em Stress analysis},
  pages 145--197. Wiley, 1965.

\bibitem{Friedrichs1928}
Kurt~O. Friedrichs.
\newblock Ein {V}erfahren der {V}ariationsrechnung, das {M}inimum eines
  {I}ntegrals als das {M}aximum eines anderen {A}usdruckes darzustellen.
\newblock {\em Nachrichten von der Gesellschaft der Wissenschaften zu
  G{\"o}ttingen, Mathematisch-Physikalische Klasse}, pages 13--20, 1929.

\bibitem{MR851383}
Vivette Girault and Pierre-Arnaud Raviart.
\newblock {\em Finite element methods for {N}avier-{S}tokes equations},
  volume~5 of {\em Springer Series in Computational Mathematics}.
\newblock Springer-Verlag, Berlin, 1986.
\newblock Theory and algorithms.

\bibitem{Gudi}
Thirupathi Gudi.
\newblock A new error analysis for discontinuous finite element methods for
  linear elliptic problems.
\newblock {\em Math. Comp.}, 79(272):2169--2189, 2010.

\bibitem{MR3149075}
Johnny Guzm\'{a}n and Michael Neilan.
\newblock Symmetric and conforming mixed finite elements for plane elasticity
  using rational bubble functions.
\newblock {\em Numer. Math.}, 126(1):153--171, 2014.

\bibitem{HSV}
Antti Hannukainen, Rolf Stenberg, and Martin Vohral\'{\i}k.
\newblock A unified framework for a posteriori error estimation for the
  {S}tokes problem.
\newblock {\em Numer. Math.}, 122(4):725--769, 2012.

\bibitem{JM}
Claes Johnson and Bertrand Mercier.
\newblock Some equilibrium finite element methods for two-dimensional
  elasticity problems.
\newblock {\em Numer. Math.}, 30(1):103--116, 1978.

\bibitem{MR2034620}
Ohannes~A. Karakashian and Frederic Pascal.
\newblock A posteriori error estimates for a discontinuous {G}alerkin
  approximation of second-order elliptic problems.
\newblock {\em SIAM J. Numer. Anal.}, 41(6):2374--2399, 2003.

\bibitem{ls2}
P.L. Lederer and R.~Stenberg.
\newblock Analysis of weakly symmetric mixed finite elements for elasticity.
\newblock {\em In preparation}.

\bibitem{carlorolf}
Carlo Lovadina and Rolf Stenberg.
\newblock Energy norm a posteriori error estimates for mixed finite element
  methods.
\newblock {\em Math. Comp.}, 75(256):1659--1674 (electronic), 2006.

\bibitem{MR600655}
Jind\v{r}ich Ne\v{c}as and Ivan Hlav\'{a}\v{c}ek.
\newblock {\em Mathematical theory of elastic and elasto-plastic bodies: an
  introduction}, volume~3 of {\em Studies in Applied Mechanics}.
\newblock Elsevier Scientific Publishing Co., Amsterdam-New York, 1980.

\bibitem{PS}
J.~Pitk\"{a}ranta and R.~Stenberg.
\newblock Analysis of some mixed finite element methods for plane elasticity
  equations.
\newblock {\em Math. Comp.}, 41(164):399--423, 1983.

\bibitem{MR25902}
W.~Prager and J.~L. Synge.
\newblock Approximations in elasticity based on the concept of function space.
\newblock {\em Quart. Appl. Math.}, 5:241--269, 1947.

\bibitem{RT}
P.-A. Raviart and J.~M. Thomas.
\newblock A mixed finite element method for 2nd order elliptic problems.
\newblock In {\em Mathematical aspects of finite element methods ({P}roc.
  {C}onf., {C}onsiglio {N}az. delle {R}icerche ({C}.{N}.{R}.), {R}ome, 1975)},
  pages 292--315. Lecture Notes in Math., Vol. 606. Springer, Berlin, 1977.

\bibitem{netgen}
J.~Sch{\"o}berl.
\newblock {NETGEN An advancing front 2D/3D-mesh generator based on abstract
  rules}.
\newblock {\em Computing and Visualization in Science}, 1(1):41--52, 1997.

\bibitem{MR813691}
L.~R. Scott and M.~Vogelius.
\newblock Norm estimates for a maximal right inverse of the divergence operator
  in spaces of piecewise polynomials.
\newblock {\em RAIRO Mod\'{e}l. Math. Anal. Num\'{e}r.}, 19(1):111--143, 1985.

\bibitem{RS86}
Rolf Stenberg.
\newblock On the construction of optimal mixed finite element methods for the
  linear elasticity problem.
\newblock {\em Numer. Math.}, 48(4):447--462, 1986.

\bibitem{MR1035181}
Rolf Stenberg.
\newblock Some new families of finite elements for the {S}tokes equations.
\newblock {\em Numer. Math.}, 56(8):827--838, 1990.

\bibitem{MR1086845}
Rolf Stenberg.
\newblock Postprocessing schemes for some mixed finite elements.
\newblock {\em RAIRO Mod\'{e}l. Math. Anal. Num\'{e}r.}, 25(1):151--167, 1991.

\bibitem{szabo1991finite}
B.~A. Szab{\'o} and I.~Babu{\v{s}}ka.
\newblock {\em Finite Element Analysis}.
\newblock A Wiley-Interscience publication. Wiley, 1991.

\bibitem{MR3059294}
R\"{u}diger Verf\"{u}rth.
\newblock {\em A posteriori error estimation techniques for finite element
  methods}.
\newblock Numerical Mathematics and Scientific Computation. Oxford University
  Press, Oxford, 2013.

\bibitem{WH}
V.B. Watwood and B.J. Hartz.
\newblock An equilibrium stress field model for finite element solution of
  two-dimensional elastostatic problems.
\newblock {\em Internat. Jour. Solids and Structures}, 4:857--873, 1968.

\end{thebibliography}
 \bibliographystyle{plain}

  \end{document}